  \theoremstyle{remark}
  \newtheorem{rem}{Remark}
\theoremstyle{plain}
\newtheorem{thm}{Theorem}
  \theoremstyle{plain}
  \newtheorem{cor}{Corollary}
  \theoremstyle{plain}
  \newtheorem{prop}{Proposition}
  \theoremstyle{plain}
  \newtheorem{lem}{Lemma}
\newcommand\E{\mathbb{E}}
\newcommand\p{\mathbb{P}}
\newcommand\ind{\mathbb{I}}
\newcommand\Xc{\mathcal{X}}
\newcommand\Ac{\mathcal{A}}
\global\long\def\E{\mathbb{E}}
\global\long\def\p{\mathbb{P}}
\global\long\def\Sc{\mathcal{S}}
\global\long\def\Xc{\mathcal{X}}
\global\long\def\Bc{\mathcal{B}}
\global\long\def\Ac{\mathcal{A}}
\global\long\def\ind{\mathbb{I}}
\global\long\def\go{\mathcal{O}}
\global\long\def\R{\mathbb{R}}
\title{Bayesian optimal adaptive estimation using a sieve prior\\
}
{\small
\author{Julyan Arbel\textsuperscript{1,2}, Ghislaine Gayraud\textsuperscript{1,3} and Judith Rousseau\textsuperscript{1,4}\\
\textsuperscript{1}\small{Laboratoire de Statistique, CREST, France}\\
\textsuperscript{2}\small{INSEE, France} \\
\textsuperscript{3}\small{LMAC, Universit\'e de Technologie de Compi\`egne, France}\\
\textsuperscript{4}\small{Universit\'e Paris Dauphine, France}}}
\begin{document}

\maketitle
\begin{abstract}
We derive rates of contraction of posterior distributions on
nonparametric models resulting from sieve priors. The aim of the
paper is to provide general conditions to get posterior rates when
the parameter space has a general structure, and rate adaptation
when the parameter space is, \textit{e.g.}, a Sobolev class. The
conditions employed, although standard in the literature, are
combined in a different way. The results are applied to density,
regression, nonlinear autoregression and Gaussian white noise
models. In the latter we have also considered a loss function which
is different from the usual $l^2$ norm, namely the pointwise loss.
In this case it is possible to prove that the adaptive Bayesian
approach for the $l^2$ loss is strongly suboptimal and we provide a
lower bound on the rate.
\end{abstract}

\textbf{Keywords}
    {adaptation},
    {minimax criteria},
    {nonparametric models},
    {rate of contraction},
    {sieve prior},
    {white noise model}.

\section{Introduction}

The asymptotic behaviour of  posterior distributions in nonparametric
models has received growing consideration in the literature over the
last ten years. Many different models have been considered, ranging
from the problem of density estimation in i.i.d. models
\citep{Barron:1999p213,Ghosal:2000p242}, to sophisticated dependent
models \citep{ju_2010}. For these models, different families of
priors have also been considered, where the most common are
Dirichlet process mixtures (or related priors), Gaussian processes
\citep{van_der_vaart_rates_2008}, or series expansions on a basis
(such as wavelets, see \citealp{abramovich1998wavelet}).

In this paper we focus on a family of priors called \emph{sieve
priors}, introduced as \emph{compound priors} and discussed by
\citet{zhao1993frequentist,Zhao:2000p98}, and further studied by
\citet{Shen:2001p194}. It is defined for models
$(\Xc^{(n)},A^{(n)},P_{\boldsymbol{\theta}}^{(n)}:\boldsymbol{\theta}\in\Theta)$,
$n\in\mathbb{N}\backslash \{0\}$, where $\Theta
\subseteq\R^\mathbb{N}$, the set of sequences. Let $A$ be a
$\sigma$-field associated to $\Theta$. The observations are denoted
$X^{n}$, where the asymptotics are driven by $n$. The probability
measures $P_{\boldsymbol{\theta}}^{(n)}$ are dominated by some
reference measure $\mu$, with density
$p_{\boldsymbol{\theta}}^{(n)}$. Remark that such an
infinite-dimensional parameter $\boldsymbol{\theta}$ can often
characterize a functional parameter, or a curve,
$\boldsymbol{f}=\boldsymbol{f}_{\boldsymbol{\theta}}$. For instance,
in regression, density or spectral density models, $\boldsymbol{f}$
represents a regression function, a log density or a log spectral
density respectively, and $\boldsymbol{\theta}$ represents its
coordinates in an appropriate basis
$\boldsymbol{\psi}=(\psi_{j})_{j\geq 1}$ (\textit{e.g.}, a Fourier,
a wavelet, a log spline, or an orthonormal basis in general). In
this paper we study frequentist properties of the posterior
distributions as $n$ tends to infinity, assuming that data $X^{n}$
are generated by a measure $P_{\boldsymbol{\theta}_0}^{(n)}$,
$\boldsymbol{\theta}_{0}\in\Theta$. We study in particular rates of
contraction of the posterior distribution and rates of convergence
of the risk.

A sieve prior $\Pi$ is expressed as
\begin{equation}
\boldsymbol{\theta}\sim\Pi(\,\cdot\,)=\sum_{k=1}^\infty\pi(k)\Pi_{k}(\,\cdot\,),\label{eq:sievep}\end{equation}
where $\sum_{k}\pi(k)=1$, $\pi(k)\geq0$, and the $\Pi_{k}$'s are
prior distributions on so-called sieve spaces $\Theta_k=\R^{k}$. Set
$\boldsymbol{\theta}_k=(\theta_1,\ldots,\theta_k)$ the
finite-dimensional vector of the first $k$ entries of
$\boldsymbol{\theta}$. Essentially, the whole prior $\Pi$ is
seen as a hierarchical prior, see Figure \ref{fig:sieve}. The hierarchical parameter $k$, called
threshold parameter, has prior $\pi$. Conditionally on $k$, the
prior on $\boldsymbol{\theta}$ is $\Pi_{k}$ which is supposed to
have mass only on $\Theta_k$ (this amounts to say that the priors on
the remaining entries $\theta_{j}$, $j>k$, are point masses at 0).
We assume that $\Pi_{k}$ is an independent prior on the coordinates
$\theta_j$, $j=1,\ldots,k$, of $\boldsymbol{\theta}_{k}$ with a
unique probability density~$g$ once rescaled by positive
 $\boldsymbol{\tau}=(\tau_{j})_{j\geq 1}$. Using the same notation $\Pi_{k}$ for probability and density with Lebesgue measure or $\R^k$,
we have
\begin{equation}
\forall\boldsymbol{\theta}_k\in\Theta_k,\quad\Pi_{k}\left(\boldsymbol{\theta}_{k}\right)=\prod_{j=1}^{k}\frac{1}{\tau_{j}}g\left(\frac{\theta_{j}}{\tau_{j}}\right).\label{eq:gn}\end{equation}
Note that the quantities $\Pi,\,\Pi_{k},\,\pi,\,\boldsymbol{\tau}$
and $g$ could depend on $n$. {Although not purely Bayesian, data dependent priors are quite common in the literature. For instance, \cite{Ghosal:2007p267} use a similar prior with a deterministic cutoff $k=\lfloor n^{1/(2\alpha+1)} \rfloor$ in application 7.6.}\medskip

{We will also consider the case where the prior is truncated to an $l^1$ ball of radius $r_1>0$ (see the nonlinear AR(1) model application in Section \ref{AR1}) 
\begin{equation}
\forall\boldsymbol{\theta}_k\in\Theta_k,\quad\Pi_{k}\left(\boldsymbol{\theta}_{k}\right)\propto\prod_{j=1}^{k}\frac{1}{\tau_{j}}g\left(\frac{\theta_{j}}{\tau_{j}}\right)\mathbb{I}(\sum_{j=1}^k \vert \theta_j \vert \leq r_1).
\label{eq:gntrunc}
\end{equation}
}
\begin{figure}
\begin{center}
\includegraphics[width=3cm]{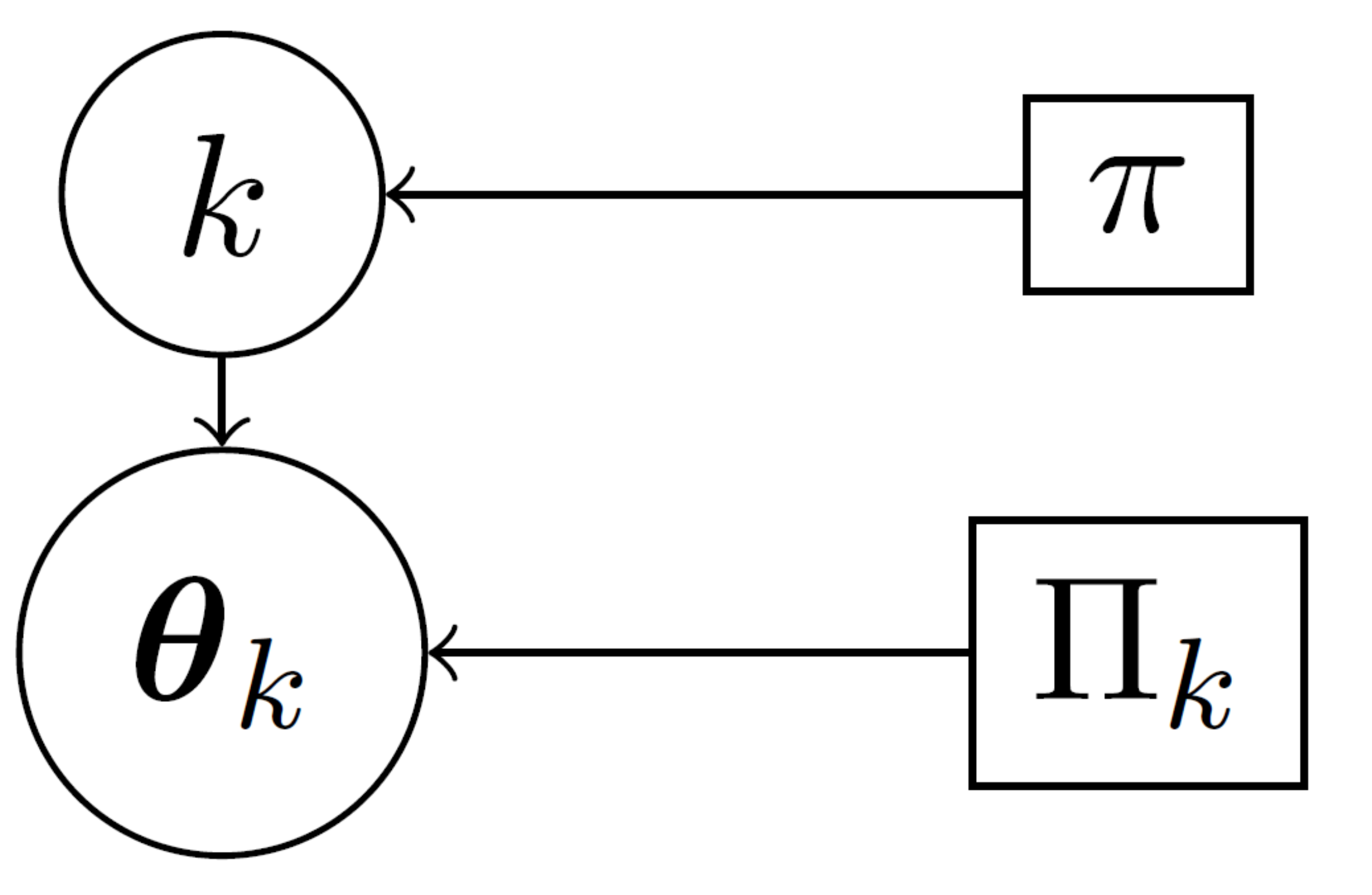}\\
  \caption{Graphical representation of the hierarchical structure of the
\emph{sieve prior} given by Equation (\ref{eq:sievep})}\label{fig:sieve}
\end{center}
\end{figure}

The posterior distribution $\Pi(\,\cdot\,\vert X^{n})$  is defined
by, for all measurable sets $B$ of $\Theta$,
\begin{equation}
\Pi(B|X^{n})=\frac{\int_{B}p_{\boldsymbol{\theta}}^{(n)}(X^{n})d\Pi(\boldsymbol{\theta})}{\int_{\Theta}p_{\boldsymbol{\theta}}^{(n)}(X^{n})d\Pi(\boldsymbol{\theta})}.\label{eq:pos}\end{equation}
Given the sieve prior $\Pi$, we study the rate of contraction of the
posterior distribution in
$P_{\boldsymbol{\theta}_0}^{(n)}-$probability with respect to a
semimetric $d_{n}$ on $\Theta$. This rate is defined as the best
possible (\textit{i.e.} the smallest) sequence
$(\epsilon_{n})_{n\geq1}$ such that
\begin{equation*}
 \Pi\left(\boldsymbol{\theta}:\,
d_{n}^2(\boldsymbol{\theta},\boldsymbol{\theta}_{0})\geq
M\epsilon_{n}^2|X^{n}\right) \underset{n\to\infty}{\longrightarrow}0
,\end{equation*} in $P_{\boldsymbol{\theta}_0}^{(n)}$ probability,
for some $\boldsymbol{\theta}_{0}\in\Theta$ and a positive constant
$M$, which can be chosen as large as needed. We also derive
convergence rates for the posterior loss
$\Pi(d_{n}^{2}(\boldsymbol{\theta},\boldsymbol{\theta}_{0})\vert
X^{n})$ in $P_{\boldsymbol{\theta}_0}^{(n)}-$probability.

The posterior concentration rate is optimal when it coincides with
the minimax rates of convergence, when $\boldsymbol{\theta}_0$
belongs to a given functional class, associated to the same
semimetric $d_n$. Typically these minimax rates of convergence are
defined for functional classes indexed by a smoothness parameter
Sobolev, H\"older, or more generally Besov spaces.

The objective of this paper is to find  mild generic assumptions on
the sieve prior $\Pi$ of the form (\ref{eq:sievep}), on models
$P_{\boldsymbol{\theta}}^{(n)}$ and on  $d_{n}$, such that the
procedure adapts to the optimal rate in the minimax sense, both for
the posterior distribution and for the risk. Results in Bayesian nonparametrics literature about contraction rates are usually of two kinds.
Firstly, general assumptions on priors and models allow to derive rates, see for example \citet{Shen:2001p194,Ghosal:2000p242,Ghosal:2007p267}. Secondly, other papers focus on a particular
prior and obtain contraction rates in a particular model, see for instance \citet{Belitser:2003p168} in the white noise model, \citet{de2010adaptive} in regression, and \citet{scricciolo_convergence_2006} in density.
The novelty of this paper is that our results hold for a family of priors (sieve priors) without a specific underlying model, and can be applied to different models.

An additional interesting property that is sought at the same time
as convergence rates is
 adaptation. This means that, once specified a loss function (a semimetric $d_n$ on $\Theta$), and a collection of classes of
different smoothnesses for the parameter, one constructs a procedure
which is independent of the smoothness, but which is rate optimal
(under the given loss $d_n$), within each class. Indeed, the optimal
rate naturally depends on the smoothness of the parameter, and
standard straightforward estimation techniques usually use it as an
input. This is all the more an important issue that relatively few
instances in the Bayesian literature are available in this area.
That property is often obtained when the unknown parameter is
assumed to belong to a discrete set, see for example
\citet{Belitser:2003p168}. There exist some results in the context
of density estimation by \citet{Huang:2004p7016},
\citet{scricciolo_convergence_2006},
 \citet{Ghosal:2008p6659}, \citet{van_der_vaart_adaptive_2009}, \citet{Rivoirard:2009p6204},  \citet{rousseau2010rates} and \citet{kruijer2010adaptive},
 in regression by
\citet{de2010adaptive}, and in spectral density estimation by
\citet{kruijer_spectral}.
What enables adaptation
in our results is the thresholding induced by the prior on $k$: the
posterior distribution of  parameter $k$ concentrates around values
that are the typical efficient size of models of the true
smoothness.

As seen from our assumptions in Section \ref{sec:assumptions} and
from the general results (Theorem \ref{thm:apo} and Corollary
\ref{cor:point}), adaptation is relatively straightforward under
sieve priors defined by (\ref{eq:sievep}) when the semimetric is a
global loss function which acts like the Kullback-Leibler
divergence, the $l^2$ norm on $\boldsymbol{\theta}$ in the
regression problem, or the Hellinger distance in the density
problem. If the loss function (or the semimetric) $d_n$ acts
differently, then the posterior distribution (or the risk) can be
quite different (suboptimal). This is illustrated in Section 3.2 for
the white noise model (\ref{eq:wnm2}) when the loss is a local loss
function as in the case of the estimation of $\boldsymbol{f}(t)$,
for a given $t$, where $d_n(\boldsymbol{f},\boldsymbol{f}_0) =
(\boldsymbol{f}(t)-\boldsymbol{f}_0(t))^2$. This phenomenon has been
encountered also by \citet{kruijer_spectral}. It  is not merely a
Bayesian issue: \citet{Cai:2007p90} show that an optimal estimator
under global loss cannot be locally optimal at each point
$\boldsymbol{f}(t)$ in the white noise model. The penalty between
global and local rates is at least a $\log n$ term.
\citet{abramovich_optimality_2004} and \citet{Abramovich:2007p6014}
obtain similar results with Bayesian wavelet estimators in the same
model.

The paper is organized as follows. Section \ref{sec:General-case}
first provides a general result on rates of contraction for the
posterior distribution in the setting of sieve priors.
We also derive a result in terms of posterior loss, and show that
the rates are adaptive optimal for Sobolev smoothness classes. The
section ends up with applications to the density, the regression
function and the nonlinear autoregression function estimation. In
Section \ref{sec:app}, we study more precisely the case of the white
noise model, which is a benchmark model. We study in detail the
difference between global or pointwise losses in this model, and
provide a lower bound for the latter loss, showing that sieve priors
lead to suboptimal contraction rates. Proofs are deferred to the
Appendix.

\subsection*{Notations}
We use the following notations. Vectors are written in bold letters,
for example $\boldsymbol{\theta}$ or $\boldsymbol{\theta}_0$, while
light-face is used for their entries, like $\theta_j$ or
$\theta_{0j}$. We denote
by 
$\boldsymbol{\theta}_{0k}$ the projection of
$\boldsymbol{\theta}_{0}$ on its first $k$ coordinates, and by
$p_{0k}^{(n)}$ and $p_{0}^{(n)}$ the densities of the observations
in the corresponding models. We denote by $d_{n}$ a semimetric,
 by $\Vert \cdot \Vert_2 $
 the $l^2$ norm (on vectors) in $\Theta$ or the $L^2$ norm (on curves $\boldsymbol{f}$), and by
$\Vert \cdot\Vert_{2,k}$ the $l^2$ norm restricted to the first $k$
coordinates of a parameter. Expectations $\E_{0}^{(n)}$ and
$\E_{\boldsymbol{\theta}}^{(n)}$ are defined with respect to
$P_{\boldsymbol{\theta}_0}^{(n)}$ and
$P_{\boldsymbol{\theta}}^{(n)}$ respectively. The same notation
$\Pi\left(\,\cdot\,\vert X^n\right)$ is used for posterior
probability or posterior expectation. The expected posterior risk
and the frequentist risk relative to $d_n$ are defined and denoted
by
$\mathcal{R}_n^{d_n}(\boldsymbol{\theta}_{0})=\E_{0}^{(n)}\Pi(d_{n}^{2}(\boldsymbol{\theta},\boldsymbol{\theta}_{0})\vert
X^{n})$ and
$R_n^{d_n}(\boldsymbol{\theta}_{0})=\E_{0}^{(n)}(d_{n}^2(\widehat{\boldsymbol{\theta}},\boldsymbol{\theta}_{0}))$
respectively (for an estimator $\widehat{\boldsymbol{\theta}}$ of
$\boldsymbol{\theta}_0$), where the mention of
$\boldsymbol{\theta}_{0}$ might be omitted \citep[\textit{cf.}][Section 2.3]{robert2007bayesian}.
We denote by $\varphi$ 
the standard Gaussian probability density.

Let $K$ denote the Kullback-Leibler divergence $K(f,g) = \int
f\log(f/g)d\mu$, and $V_{m,0}$ denote the $m^{\text{th}}$ centered
moment $V_{m,0}(f,g) = \int f | \log(f/g) -\allowbreak K(f,g)
|^{m}\allowbreak d\mu$, with $m\geq 2$.

Define two additional divergences $\widetilde{K}$ and
$\widetilde{V}_{m,0}$, which are expectations with respect to
$p_0^{(n)}$, $\widetilde{K}(f,g) = \int p_{0}^{(n)}|\log(f/g)|d\mu$
and $\widetilde{V}_{m,0}(f,g) = \int p_{0}^{(n)}|\log(f/g)
-K(f,g)|^{m}d\mu$.

We denote by $C$ a generic constant whose value is of no importance
and we use $\lesssim$ for inequalities up to a multiple constant.

\section{General case\label{sec:General-case}}

In this section we give a general theorem which provides an upper
bound on posterior contraction rates $\epsilon_n$. Throughout the
section, we assume that the sequence of positive numbers
$\left(\epsilon_{n}\right)_{n\geq1}$, or
$\left(\epsilon_{n}(\beta)\right)_{n\geq1}$ when we point to a
specific value of smoothness $\beta$, is such that
$\epsilon_{n}\underset{n\rightarrow\infty}{\longrightarrow}0$ and
$n\epsilon_{n}^{2}/\log
n\underset{n\rightarrow\infty}{\longrightarrow}\infty$.

We introduce the following numbers
\begin{equation}\label{kn}
j_{n}= \lfloor j_{0}n\epsilon_{n}^{2}/\log(n)\rfloor,\,
k_{n}=\lfloor M_0 j_n \log(n)/L(n) \rfloor, 
\end{equation}
 for $j_{0}>0,M_0>1$, where $L$ is a slow varying function such that $L\leq \log$, hence $j_n\leq k_n$.  We use $k_n$ to define the following approximation
 subsets
of $\Theta$
\begin{align*}
&\Theta_{k_n}(Q) = \left\{
\boldsymbol{\theta}\in\Theta_{k_n}:\,\left\Vert \boldsymbol{\theta}\right\Vert_{2,k_n} \leq
n^Q\right\},
\end{align*}
for $Q>0$. Note that the prior actually charges a union of spaces of
dimension $k$, $k\geq 1$, so that $\Theta_{k_n}(Q)$ can be seen as a
union of spaces of dimension $k\leq k_n$. Lemma \ref{lem:prior_mass}
provides an upper bound on the prior mass of $\Theta_{k_n}(Q)$.

It has been shown
\citep{Ghosal:2000p242,Ghosal:2007p267,Shen:2001p194} that an
efficient way to derive rates of contraction of posterior
distributions is to bound from above the numerator of (\ref{eq:pos})
using tests (and $k_n$ for the increasing
sequence $\Theta_{k_n}(Q)$), and to bound from below its denominator
using an approximation of $p_0^{(n)}$ based on a value
$\boldsymbol{\theta} \in \Theta_{j_n}$ close to $\boldsymbol{\theta}$. The latter is done in Lemma \ref{lem:minoration_bn} where we use $j_n$ to define the finite
component approximation $\boldsymbol{\theta}_{0j_n}$ of
$\boldsymbol{\theta}_{0}$, and we show that the prior mass of the following Kullback-Leibler
neighbourhoods of $\boldsymbol{\theta}_{0}$, $\mathcal{B}_{n}(m)$,
$n\in\mathbb{N}^*$, are lower bounded by an exponential term:
$$\mathcal{B}_{n}(m) = \left\{ \boldsymbol{\theta}:\,
K\left(p_{0}^{(n)},p_{\boldsymbol{\theta}}^{(n)}\right)\leq
2n\epsilon_{n}^{2},V_{m,0}\left(p_{0}^{(n)},p_{\boldsymbol{\theta}}^{(n)}\right)\leq 2^{m+1}(n\epsilon_{n}^{2})^{m/2}\right\}.$$

Define two  neighbourhoods of $\boldsymbol{\theta}_{0}$ in the sieve
space $\Theta_{j_n}$, $\widetilde{\mathcal{B}}_{n}(m)$, similar to
$\mathcal{B}_{n}(m)$ but using $\widetilde{K}$ and
$\widetilde{V}_{m,0}$, and $\mathcal{A}_{n}(H_1)$, an $l^2$ ball of
radius $n^{-H_1}$, $H_1>0$:
\begin{align*}
&\widetilde{\mathcal{B}}_{n}(m) = \left\{ \boldsymbol{\theta}\in\Theta_{j_n}:\,\widetilde{K}\left(p_{0j_n}^{(n)},p_{\boldsymbol{\theta}}^{(n)}\right)\leq n\epsilon_{n}^{2},
\widetilde{V}_{m,0}\left(p_{0j_n}^{(n)},p_{\boldsymbol{\theta}}^{(n)}\right)\leq \left(n\epsilon_{n}^{2}\right)^{m/2}\right\} ,\\
&\mathcal{A}_{n}(H_1) = \left\{ \boldsymbol{\theta}\in\Theta_{j_n}:\,\left\Vert
\boldsymbol{\theta}_{0j_n}-\boldsymbol{\theta}\right\Vert_{2,j_n}\leq
n^{-H_1}\right\}.
\end{align*}

\subsection{Assumptions\label{sec:assumptions}}

The following technical assumptions are involved in the subsequent
analysis, and are discussed at the end of this section. Recall that the true parameter is $\boldsymbol{\theta}_0$, under which the observations have density $p_{0}^{(n)}$.

$\boldsymbol{A_1}$ \textbf{Condition on
$p_{0}^{(n)}$} \textbf{and} $\epsilon_{n}$. For
$n$ large enough and for some $m>0$,
\begin{equation*}
K\left(p_{0}^{(n)},p_{0j_n}^{(n)}\right)\leq n\epsilon_{n}^{2}\quad\text{ and }\quad V_{m,0}\left(p_{0}^{(n)},p_{0j_n}^{(n)}\right)\leq\left(n\epsilon_{n}^{2}\right)^{m/2}.
\end{equation*}

$\boldsymbol{A_2}$ \textbf{Comparison between norms}. The following
inclusion holds in $\Theta_{j_n}$
 \begin{equation*} \exists H_1>0,\text{
s.t.
}\Ac_{n}(H_1)\subset\widetilde{\Bc}_{n}(m).
\end{equation*}

$\boldsymbol{A_3}$ \textbf{Comparison between} $d_{n}$ \textbf{and}
$l^2$. There exist three non negative constants $D_{0},D_{1},D_{2}$
such that, for any two
$\boldsymbol{\theta},\boldsymbol{\theta}'\in\Theta_{k_n}(Q)$,
\begin{equation*}
d_{n}(\boldsymbol{\theta},\boldsymbol{\theta}')\leq D_{0}k_{n}^{D_{1}}\left\Vert \boldsymbol{\theta}-\boldsymbol{\theta}'\right\Vert_{2,k_n} ^{D_{2}}.
\end{equation*}

$\boldsymbol{A_4}$ \textbf{Test Condition}. There exist two positive
constants $c_1$ and $\zeta < 1$ such that, for every
$\boldsymbol{\theta}_{1}\in\Theta_{k_n}(Q)$, there exists a test
$\phi_{n}(\boldsymbol{\theta}_1)\in[0,1]$ which satisfies
\begin{align*}
&\E_{0}^{(n)}(\phi_{n}(\boldsymbol{\theta}_1))\leq
e^{-c_{1}nd_{n}^2(\boldsymbol{\theta}_{0},\boldsymbol{\theta}_{1})}\quad\text{ and
}\quad\\
&\sup_{d_{n}(\boldsymbol{\theta},\boldsymbol{\theta}_{1})<\zeta d_{n}(\boldsymbol{\theta}_{0},\boldsymbol{\theta}_{1})}\E_{\boldsymbol{\theta}}^{(n)}\left(1-\phi_{n}(\boldsymbol{\theta}_1)\right)\leq
e^{-c_{1}nd_{n}^2(\boldsymbol{\theta}_{0},\boldsymbol{\theta}_{1})}.
\end{align*}

$\boldsymbol{A_5}$ \textbf{On the prior} $\Pi$. There exist positive 
constants $a,b,G_1,G_2,G_3,G_4,H_2,\alpha$ and $\tau_0$ such that
$\pi$ satisfy
\begin{equation}
\forall k=1,2,\ldots, \quad e^{-akL(k)}\leq  \pi(k)  \leq e^{-bkL(k)},\label{eq:prior k}
\end{equation}
where the function $L$ is a slow varying function introduced in
Equation (\ref{kn});  $g$ satisfy
{
\begin{align}
\forall {\theta} \in \R,\quad
G_{1}e^{-G_{2}\left|{\theta}\right|^{\alpha}}\leq & g({\theta})  \leq
G_{3}e^{-G_{4}\left|{\theta}\right|^{\alpha}}.\label{eq:priorg}
\end{align}
}
The scales $\boldsymbol{\tau}$ defined in
Equation (\ref{eq:gn}) satisfy the following conditions
\begin{align}
&\max_{j\geq 1}\tau_{j}\leq\tau_{0},\label{eq:maxtau}\\
&\min_{j\leq k_{n}}\tau_{j}\geq n^{-H_2},\label{eq:mintau}\\
&\sum_{j=1}^{j_{n}}\left|{\theta}_{0j}\right|^{\alpha}/\tau_{j}^{\alpha}\leq Cj_{n}\log n.\label{eq:somme theta tau}\end{align}

\begin{rem}\noindent

\begin{itemize}
\item Conditions $\boldsymbol{A_1}$ and $\boldsymbol{A_2}$ are local in that they need to be checked at the true parameter $\boldsymbol{\theta}_0$ only. They are
useful to prove that the prior puts sufficient mass around
Kullback-Leibler neighbourhoods of the true probability.
Condition $\boldsymbol{A_1}$ is a limiting factor to the rate:
it characterizes $\epsilon_{n}$ through the capacity of
approximation of $p_{0}^{(n)}$ by $p_{0j_n}^{(n)}$: the smoother
$p_{0}^{(n)}$, the closer $p_{0}^{(n)}$ and $p_{0j_n}^{(n)}$,
and the faster $\epsilon_{n}$. In many models, they are ensured
because $K(p_{0}^{(n)},p_{\boldsymbol{\theta}_{j_n}}^{(n)})$ and
$V_{m,0}(p_{0}^{(n)},p_{\boldsymbol{\theta}_{j_n}}^{(n)})$ can
be written locally (meaning around $\boldsymbol{\theta}_0$) in
terms of the $l^2$ norm $\Vert
\boldsymbol{\theta}_0-\boldsymbol{\theta}_{j_n}\Vert_2 $
directly. Smoothness assumptions are then typically required to
control $\Vert
\boldsymbol{\theta}_0-\boldsymbol{\theta}_{j_n}\Vert_2 $.

 It is the case for instance for Sobolev and Besov smoothnesses
(\textit{cf.} Equation (\ref{eq:borne sob})). The control is
expressed with a power of $j_n$, whose comparison to
$\epsilon_n^2$ provides in turn a tight way to tune the rate
(\textit{cf.} the proof of Proposition \ref{prop:optimal}).

Note that the constant $H_1$ in Condition $\boldsymbol{A_2}$ can
be chosen as large as needed: if $\boldsymbol{A_2}$ holds for  a
specified positive constant $H_0$, then it does for any
$H_1>H_0$. This makes the condition quite loose. A more
stringent version of $\boldsymbol{A_2}$, if simpler, is the
following.

$\boldsymbol{A_2'}$ \textbf{Comparison between norms}. For any
$\boldsymbol{\theta}\in\Theta_{j_n}$
\begin{eqnarray*}
\widetilde{K}\left(p_{0j_n}^{(n)},p_{\boldsymbol{\theta}}^{(n)}\right) & \leq & Cn\left\Vert \boldsymbol{\theta}_{0j_n}-\boldsymbol{\theta}\right\Vert_{2,j_n} ^{2}\text{ and }\\
\widetilde{V}_{m,0}\left(p_{0j_n}^{(n)},p_{\boldsymbol{\theta}}^{(n)}\right)
 & \leq & Cn^{m/2}\left\Vert
 \boldsymbol{\theta}_{0j_n}-\boldsymbol{\theta}\right\Vert_{2,j_n}
 ^{m}.\end{eqnarray*} This is satisfied in the Gaussian white
noise model (see Section \ref{sec:app}).

\item Condition $\boldsymbol{A_3}$ is generally mild. The reverse
is more stringent since $d_{n}$ may be bounded, as is the case
with the Hellinger distance. $\boldsymbol{A_3}$ is satisfied in
many common situations, see for example the applications later
on. Technically, this condition allows to switch from a covering
number (or entropy) in terms of the $l^2$ norm to a covering
number in terms of the semimetric $d_n$.

\item Condition $\boldsymbol{A_4}$ is common in the Bayesian nonparametric literature. A review of
different models and their corresponding tests is given in
\citet{Ghosal:2007p267} for example. The tests strongly depend
on the semimetric $d_n$.

\item Condition $\boldsymbol{A_5}$ concerns the prior. Equations (\ref{eq:prior k}) and (\ref{eq:priorg}) state that the tails
of $\pi$ and $g$ have to be at least exponential or of
exponential type. For instance, if $\pi$ is the geometric
distribution, $L=1$, and if it is the Poisson distribution,
$L(k)=\log(k)$ (both are slow varying functions). Laplace and
Gaussian distributions are covered by $g$, with  $\alpha=1$ and
$\alpha=2$ respectively. These equations aim at controlling the
prior mass of $\Theta_{k_n}^{c}(Q)$, the complement of
$\Theta_{k_n}(Q)$ in $\Theta$ (see Lemma \ref{lem:prior_mass}).
The case where the scale $\boldsymbol{\tau}$  depends  on $n$ is
considered in \citet{Babenko:2009p2,babenko2010oracle} in the
white noise model. Here the constraints on $\boldsymbol{\tau}$
are rather mild since they are allowed to go to zero polynomially
as a function of $n$, and must be upper bounded.
\citet{Rivoirard:2009p6204} study a family of scales
$\boldsymbol{\tau}=(\tau_{j})_{j\geq 1}$ that are decreasing
polynomially with $j$. Here the prior is more general and
encompasses both frameworks. Equations (\ref{eq:prior k}) -
(\ref{eq:somme theta tau}) are needed in Lemmas
\ref{lem:prior_mass} and \ref{lem:minoration_bn}  for bounding
respectively $\Pi(\mathcal{B}_{n}(m))$ from below and
$\Pi(\Theta_{k_n}^{c}(Q))$ from above. A smoothness assumption
on $\boldsymbol{\theta}_0$ is usually required for Equation
(\ref{eq:somme theta tau}).
\end{itemize}
\end{rem}

\subsection{Results}
\subsubsection{Concentration and posterior loss\label{seq:conc}}
The following theorem provides an upper bound for the rate of
contraction of the posterior distribution.
\begin{thm}
\label{thm:apo}If  Conditions  $\boldsymbol{A_1}$ -
$\boldsymbol{A_5}$ hold, then for $M$ large enough and for $L$ introduced in Equation (\ref{kn}),
\begin{equation*}
 \E_{0}^{(n)}\Pi\left(\boldsymbol{\theta}:\
d_{n}^{2}(\boldsymbol{\theta},\boldsymbol{\theta}_{0})\geq M\frac{\log n}{L(n)}\epsilon_{n}^{2}\vert
X^{n}\right) = \mathcal{O}\left((n\epsilon_{n}^{2})^{-m/2}\right)
\underset{n\to\infty}{\longrightarrow}0.
\end{equation*}
\end{thm}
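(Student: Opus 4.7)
The plan is to follow the Ghosal--Ghosh--van der Vaart testing scheme, adapted to sieve priors whose effective dimension is $k_n$. Writing the posterior as the ratio of $N_n = \int_{\{d_n^2\geq u_n^2\}} (p_{\boldsymbol{\theta}}^{(n)}/p_0^{(n)})(X^n)\,d\Pi(\boldsymbol{\theta})$ over $D_n = \int_\Theta (p_{\boldsymbol{\theta}}^{(n)}/p_0^{(n)})(X^n)\,d\Pi(\boldsymbol{\theta})$, with $u_n^2 = M\log(n)\epsilon_n^2/L(n)$, and introducing the event $A_n = \{D_n \geq \tfrac{1}{2}\Pi(\Bc_n(m))e^{-2n\epsilon_n^2}\}$ and a global test $\phi_n$ on $\Theta_{k_n}(Q)$, one has the standard decomposition
\[
\E_0^{(n)}\Pi\!\left(d_n^2 \geq u_n^2 \mid X^n\right) \leq \E_0^{(n)}[\phi_n] + \p_0^{(n)}(A_n^c) + \E_0^{(n)}\bigl[(1-\phi_n)\ind_{A_n} N_n/D_n\bigr].
\]
I would then bound each of these three terms separately.

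For the denominator, I would invoke Lemma \ref{lem:minoration_bn}, which under $\boldsymbol{A_1}$, $\boldsymbol{A_2}$ and $\boldsymbol{A_5}$ yields $\Pi(\Bc_n(m)) \geq e^{-c n\epsilon_n^2}$ for some constant $c>0$. The usual Markov-type argument applied to the centered log-likelihood, exploiting the $m$-th moment control $V_{m,0}\leq 2^{m+1}(n\epsilon_n^2)^{m/2}$ built into $\Bc_n(m)$, then produces $\p_0^{(n)}(A_n^c) = \mathcal{O}((n\epsilon_n^2)^{-m/2})$, which is already the target rate.

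For the numerator, I would split $N_n$ into the part on $\Theta_{k_n}(Q)$ and its complement. On $\Theta_{k_n}^c(Q)$, Lemma \ref{lem:prior_mass} based on $\boldsymbol{A_5}$ gives $\Pi(\Theta_{k_n}^c(Q)) \leq e^{-C'n\epsilon_n^2}$ for $Q$ large, which after division by the denominator bound is exponentially small. On $\Theta_{k_n}(Q)$, I would partition $\{d_n^2 \geq u_n^2\}$ into dyadic shells $S_j = \{\boldsymbol{\theta} : 2^j u_n \leq d_n(\boldsymbol{\theta},\boldsymbol{\theta}_0) < 2^{j+1}u_n\}$, use $\boldsymbol{A_3}$ to convert an $\ell^2$-net of $\Theta_{k_n}(Q)$ of log-cardinality $Ck_n\log n$ into a $d_n$-net at scale $\zeta 2^j u_n$, and apply $\boldsymbol{A_4}$ at each net point $\boldsymbol{\theta}_1$ with $d_n(\boldsymbol{\theta}_0,\boldsymbol{\theta}_1) \asymp 2^j u_n$. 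A union bound produces type-I and local type-II errors of order $e^{Ck_n\log n - c_1 n 4^j u_n^2}$; geometric summation over $j \geq 0$ controls $\E_0^{(n)}[\phi_n]$, and integrating the type-II bound against the prior (bounded by $1$ on each shell) and dividing by $\Pi(\Bc_n(m))e^{-2n\epsilon_n^2}$ controls the third term.

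The main obstacle is calibrating $M$: the $\ell^2$-entropy exponent $Ck_n\log n = CM_0 j_0 n\epsilon_n^2 \log(n)/L(n)$ must be dominated by $c_1 n u_n^2 = c_1 M n\epsilon_n^2 \log(n)/L(n)$, which forces $M > CM_0/c_1$ and is precisely what drives the inflation factor $\log(n)/L(n)$ relative to the naive rate $\epsilon_n^2$. The polynomial factor $k_n^{D_1}$ picked up in $\boldsymbol{A_3}$ only perturbs constants and is absorbed by enlarging $M$. Once $M$ is fixed in this fashion, the first and third terms in the decomposition are exponentially small while $\p_0^{(n)}(A_n^c)$ contributes the stated $\mathcal{O}((n\epsilon_n^2)^{-m/2})$ rate, yielding the theorem.
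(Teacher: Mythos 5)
Your proof is correct and follows essentially the same route as the paper: the three-way decomposition into test-detection error, small-denominator event, and remaining mass on the alternative, with Lemma \ref{lem:minoration_bn} for the denominator lower bound, Lemma \ref{lem:prior_mass} for $\Theta_{k_n}^c(Q)$, a covering of $\Theta_{k_n}(Q)$ via $\boldsymbol{A_3}$/$\boldsymbol{A_4}$ for the test, and the Ghosal--van der Vaart moment argument (Lemma 10 of \citealp{Ghosal:2007p267}) that turns the $V_{m,0}$ control in $\mathcal{B}_n(m)$ into the polynomial bound $\p_0^{(n)}(A_n^c)=\mathcal{O}((n\epsilon_n^2)^{-m/2})$, which is the dominant term. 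The only mild departure is cosmetic: you aggregate the tests by dyadic shelling of $\{d_n \geq u_n\}$, whereas the paper's Lemma \ref{lem:tests} constructs a single test from a single-scale $\ell^2$-net of $\Sc_n(M)\cap\Theta_{k_n}(Q)$; both lead to the same calibration constraint $c_1 M > C M_0$ and absorb the $k_n^{D_1}$ factor into $M$, so neither buys anything over the other in this setting.
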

\begin{proof}
See the Appendix.
\end{proof}
The convergence of the posterior distribution at the rate
$\epsilon_{n}$ implies that the expected posterior risk converges
(at least) at the same rate $\epsilon_{n}$, when $d_n$ is bounded.
\begin{cor}
\label{cor:point}Under the assumptions of Theorem \ref{thm:apo},
with a value of $m$ in Conditions $\boldsymbol{A_1}$ and
$\boldsymbol{A_2}$ such that $(n\epsilon_{n}^{2})^{-m/2} =
\mathcal{O}(\epsilon_{n}^{2})$, and if $d_{n}$ is bounded on
$\Theta$, then the expected posterior risk given
$\boldsymbol{\theta}_{0}$ and $\Pi$ converges at least at the same
rate $\epsilon_{n}$
\[
\mathcal{R}_{n}^{d_{n}}=\E_{0}^{(n)}\Pi(d_{n}^{2}(\boldsymbol{\theta},\boldsymbol{\theta}_{0})\vert X^{n})=\go\left(\frac{\log n}{L(n)}\epsilon_{n}^{2}\right).\]
\end{cor}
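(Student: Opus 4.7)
The plan is to decompose the posterior expectation of $d_n^2$ by splitting the parameter space into a ``good'' region where $d_n^2$ is already small and a ``bad'' region whose posterior mass is controlled by Theorem \ref{thm:apo}. Set $A_n = \{\boldsymbol{\theta}:\,d_{n}^{2}(\boldsymbol{\theta},\boldsymbol{\theta}_{0})\geq M\frac{\log n}{L(n)}\epsilon_{n}^{2}\}$ with $M$ large enough for Theorem \ref{thm:apo} to apply.

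First, I would write
\[
\Pi(d_{n}^{2}(\boldsymbol{\theta},\boldsymbol{\theta}_{0})\vert X^{n}) = \Pi(d_{n}^{2}\mathbb{I}_{A_n^c}\vert X^{n}) + \Pi(d_{n}^{2}\mathbb{I}_{A_n}\vert X^{n}).
\]
On $A_n^c$ the integrand is bounded pointwise by $M\frac{\log n}{L(n)}\epsilon_n^2$, so the first term is at most $M\frac{\log n}{L(n)}\epsilon_n^2$. Since $d_n$ is assumed bounded on $\Theta$ by some constant $B$, the second term is at most $B^2\Pi(A_n\vert X^n)$.

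Taking expectation under $P_{\boldsymbol{\theta}_0}^{(n)}$ and applying Theorem \ref{thm:apo} yields
\[
\mathcal{R}_n^{d_n} \leq M\frac{\log n}{L(n)}\epsilon_{n}^{2} + B^2\,\E_{0}^{(n)}\Pi(A_n\vert X^n) = M\frac{\log n}{L(n)}\epsilon_{n}^{2} + \mathcal{O}\bigl((n\epsilon_n^2)^{-m/2}\bigr).
\]
By the hypothesis $(n\epsilon_n^2)^{-m/2}=\mathcal{O}(\epsilon_n^2)$, the remainder is absorbed into $\mathcal{O}(\frac{\log n}{L(n)}\epsilon_n^2)$, giving the claim.

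There is essentially no obstacle here; this is a standard conversion of a concentration rate into a risk bound via boundedness of the loss. The only delicate point is bookkeeping: one must ensure that the $M$ used in Theorem \ref{thm:apo} does not conflict with the choice of $m$ in Assumptions $\boldsymbol{A_1}$--$\boldsymbol{A_2}$, but since $m$ can be picked a priori (independently of $M$) large enough that $(n\epsilon_n^2)^{-m/2}\leq \epsilon_n^2$, both requirements can be met simultaneously.
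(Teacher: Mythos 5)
Your proof is correct and follows essentially the same route as the paper: split the posterior expectation over the complementary events $\{d_n^2 < M\tfrac{\log n}{L(n)}\epsilon_n^2\}$ and $\{d_n^2 \geq M\tfrac{\log n}{L(n)}\epsilon_n^2\}$, bound the first piece pointwise, bound the second via boundedness of $d_n$ together with Theorem~\ref{thm:apo}, and absorb the remainder using $(n\epsilon_n^2)^{-m/2}=\mathcal{O}(\epsilon_n^2)$. (Incidentally, you correctly write $B^2$ where the paper's displayed bound has $D$ rather than $D^2$; that is a harmless slip in the paper.)
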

\begin{proof}
Denote $D$ the bound of $d_{n}$, \textit{i.e.} for all
$\boldsymbol{\theta},\ \boldsymbol{\theta}'\in\Theta$,
$d_{n}(\boldsymbol{\theta},\boldsymbol{\theta}')\leq D$. We have
\begin{eqnarray*}
\mathcal{R}_{n}^{d_{n}} & \leq & M\frac{\log n}{L(n)}\epsilon_{n}^{2}+\E_{0}^{(n)}\Pi\left(\ind\left(d_{n}^{2}(\boldsymbol{\theta},\boldsymbol{\theta}_{0})\geq M\frac{\log n}{L(n)}\epsilon_{n}^{2}\right)d_{n}^{2}(\boldsymbol{\theta},\boldsymbol{\theta}_{0})\vert X^{n}\right)\\
 & \leq & M\frac{\log n}{L(n)}\epsilon_{n}^{2}+D\E_{0}^{(n)}\Pi\left(\boldsymbol{\theta}:\ d_{n}^{2}(\boldsymbol{\theta},\boldsymbol{\theta}_{0})\geq
 M\frac{\log n}{L(n)}\epsilon_{n}^{2}\vert X^{n}\right)\end{eqnarray*}
so $\mathcal{R}_{n}^{d_{n}} = \mathcal{O}(\log n/L(n)\epsilon_n^2)$ by Theorem \ref{thm:apo} and the assumption on $m$.\end{proof}
\begin{rem}
The condition on $m$ in Corollary \ref{cor:point} requires
$n\epsilon_n^2$ to grow as a power of $n$. When
$\boldsymbol{\theta}_{0}$ has Sobolev smoothness $\beta$, this is
the case since $\epsilon_n^2$ is typically of order $(n/\log
n)^{-2\beta/(2\beta+1)}$. The condition on $m$ boils down to
$m\geq4\beta$. When $\boldsymbol{\theta}_{0}$ is smoother,
\textit{e.g.} in a Sobolev space with exponential weights, the rate
is typically of order $\log n/\sqrt{n}$. Then a common way to
proceed is to resort to an exponential inequality for controlling
the denominator of the posterior distribution of Equation
(\ref{eq:pos}) \citep[see \textit{e.g.}][]{Rivoirard:2011}.
\end{rem}
\begin{rem} We can note that this result is meaningful from a non Bayesian point
of view as well. Indeed, let $\widehat{\boldsymbol{\theta}}$ be the
posterior mean estimate of $\boldsymbol{\theta}$ with respect to
$\Pi$. Then, if $\boldsymbol{\theta}\rightarrow
d_{n}^2\left(\boldsymbol{\theta},\boldsymbol{\theta}_{0}\right)$ is
convex, we have by Jensen's inequality\[
d_{n}^2(\widehat{\boldsymbol{\theta}},\boldsymbol{\theta}_{0})\leq\Pi\left(d_n^2(\boldsymbol{\theta},\boldsymbol{\theta}_{0})\vert
X^{n}\right),\] so the frequentist risk converges at the same rate
$\epsilon_{n}$
$$R_{n}^{d_{n}}=\E_{0}^{(n)}(d_{n}^2(\widehat{\boldsymbol{\theta}},\boldsymbol{\theta}_{0}))\leq
\E_{0}^{(n)}\Pi\left(d_{n}^{2}(\boldsymbol{\theta},\boldsymbol{\theta}_{0})\vert X^{n}\right)=\mathcal{R}_{n}^{d_{n}}=\go\left(\frac{\log n}{L(n)}\epsilon_{n}^{2}\right).$$
Note that we have no result for general pointwise estimates $\widehat{\boldsymbol{\theta}}$, for instance for the MAP. This latter was studied in \citet{abramovich2007optimality,abramovich2010bayesian}.
\end{rem}

\subsubsection{Adaptation\label{seq:ad}}
When considering a given class of smoothness for the parameter
$\boldsymbol{\theta}_{0}$, the minimax criterion implies an optimal
rate of convergence. Posterior (resp. risk) adaptation means that
the posterior distribution (resp. the risk) concentrates at the
optimal rate for a class of possible smoothness values.

We consider here Sobolev classes $\Theta_{\beta}(L_0)$ {for univariate problems} defined by
\begin{equation} \Theta_{\beta}(L_0)=\left\{
\boldsymbol{\theta}:\sum_{j=1}^{\infty}{\theta}_{j}^{2}j^{2\beta}<L_0\right\},\,\beta >1/2,\,L_0>0\label{eq:sob}\end{equation}
with
smoothness parameter $\beta$ and radius $L_0$. If $\boldsymbol{\theta}_0\in
\Theta_{\beta}(L_0)$, then one has the following bound
\begin{equation} \left\Vert
\boldsymbol{\theta}_{0}-\boldsymbol\theta_{0j_{n}}\right\Vert_2
^{2}=\sum_{j=j_{n}+1}^{\infty}\theta_{0j}^{2}j^{2\beta}j^{-2\beta}\leq
L_{0}j_{n}^{-2\beta}.\label{eq:borne sob}\end{equation}
\citet{Donoho:1998p1331} give the global (\textit{i.e.} under the
$l^2$ loss) minimax rate $n^{-\beta/(2\beta+1)}$ attached to
the Sobolev class of smoothness $\beta$. We show that under an
additional condition between $K$, $V_{m,0}$ and $l^2$, the upper
bound $\epsilon_{n}$ on the rate of contraction can be chosen equal
to the optimal rate, up to a $\log n$ term.
\begin{prop}
\label{prop:optimal} Let $L_0$ denote a positive fixed radius, and
$\beta_2\geq  \beta_1>1/2$. If for $n$ large enough, there exists a
positive constant $C_0$ such that
\begin{align}
&\sup_{\beta_1\leq\beta\leq\beta_2}\sup_{\boldsymbol{\theta}_0\in \Theta_\beta(L_0)}K\left(p_{0}^{(n)},p_{0j_{n}}^{(n)}\right)\leq
C_0n\left\Vert \boldsymbol{\theta}_{0}-\boldsymbol{\theta}_{0j_{n}}\right\Vert_2
^{2},\,\text{and}\nonumber\\
&\sup_{\beta_1\leq\beta\leq\beta_2}\sup_{\boldsymbol{\theta}_0\in
\Theta_\beta(L_0)}V_{m,0}\left(p_{0}^{(n)},p_{0j_{n}}^{(n)}\right)\leq
C_0^mn^{m/2}\left\Vert
\boldsymbol{\theta}_{0}-\boldsymbol{\theta}_{0j_{n}}\right\Vert_2^{m},\label{eq:optimality}\end{align} and if Conditions
$\boldsymbol{A_2}$ - $\boldsymbol{A_5}$ hold with constants
independent of $\boldsymbol{\theta}_{0}$ in the set
$\cup_{\beta_1\leq\beta\leq\beta_2} \Theta_\beta(L_0)$,
 then for $M$ sufficiently
large,
\begin{equation*}
\sup_{\beta_1\leq\beta\leq\beta_2}\sup_{\boldsymbol{\theta}_0\in \Theta_\beta(L_0)}
 \E_{0}^{(n)}\Pi\left(\boldsymbol{\theta}:\
d_{n}^{2}(\boldsymbol{\theta},\boldsymbol{\theta}_{0})\geq M\frac{\log n}{L(n)}\epsilon_{n}^{2}(\beta)\vert
X^{n}\right)\underset{n\to\infty}{\longrightarrow}0,
\end{equation*}
with
\[
\epsilon_{n}(\beta)=\epsilon_{0}\left(\frac{\log
n}{n}\right)^{\frac{\beta}{2\beta+1}},\] and $\epsilon_0$ depending
on $L_0,C_0$ and the constants involved in the assumptions, but
not depending on $\beta$.
\end{prop}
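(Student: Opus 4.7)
The strategy is to reduce the claim to Theorem \ref{thm:apo} applied at the rate $\epsilon_n(\beta) = \epsilon_0 (\log n/n)^{\beta/(2\beta+1)}$, with the constant $\epsilon_0$ chosen large enough but independent of $\beta\in[\beta_1,\beta_2]$ and of $\boldsymbol{\theta}_0\in\Theta_\beta(L_0)$. Since Conditions $\boldsymbol{A_2}$--$\boldsymbol{A_5}$ are assumed to hold with constants independent of $\boldsymbol{\theta}_0$ on the entire Sobolev scale under consideration, the only remaining task is to verify Condition $\boldsymbol{A_1}$ uniformly over $\cup_{\beta_1\leq\beta\leq\beta_2}\Theta_\beta(L_0)$ with this choice of $\epsilon_n$.

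For the verification of $\boldsymbol{A_1}$, I combine the hypothesis (\ref{eq:optimality}) with the standard Sobolev approximation inequality (\ref{eq:borne sob}), which gives $\|\boldsymbol{\theta}_0-\boldsymbol{\theta}_{0j_n}\|_2^2\leq L_0 j_n^{-2\beta}$. Substituting in (\ref{eq:optimality}) yields
\[
K(p_0^{(n)},p_{0j_n}^{(n)})\leq C_0 L_0\, n\, j_n^{-2\beta},\qquad V_{m,0}(p_0^{(n)},p_{0j_n}^{(n)})\leq (C_0 L_0^{1/2})^m\, n^{m/2}\, j_n^{-m\beta}.
\]
Both of the inequalities required by $\boldsymbol{A_1}$ then reduce to the single condition $j_n^{-2\beta}\leq c\,\epsilon_n^2$, where $c$ depends only on $C_0$ and $L_0$.

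Plugging in $j_n=\lfloor j_0 n\epsilon_n^2/\log n\rfloor$, whose size is of order $n\epsilon_n^2/\log n$, the condition becomes $\epsilon_n^{2(2\beta+1)}\gtrsim (\log n/n)^{2\beta}$, which is precisely solved by the announced rate $\epsilon_n(\beta)=\epsilon_0(\log n/n)^{\beta/(2\beta+1)}$ as soon as $\epsilon_0$ satisfies $\epsilon_0^{2(2\beta_1+1)}\geq 1/(c\, j_0^{2\beta_2})$ or a similar explicit bound depending only on $L_0$, $C_0$, $j_0$, $\beta_1$ and $\beta_2$. This choice of $\epsilon_0$ is uniform in $\beta\in[\beta_1,\beta_2]$ and in $\boldsymbol{\theta}_0\in\Theta_\beta(L_0)$, as desired.

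Having verified $\boldsymbol{A_1}$ with the target rate, one applies Theorem \ref{thm:apo} to conclude. The remaining task, which I expect to be the main point of care rather than a conceptual obstacle, is to trace through the proof of Theorem \ref{thm:apo} and confirm that all the constants involved (those coming from the prior mass bound of Lemma \ref{lem:prior_mass}, the lower bound of Lemma \ref{lem:minoration_bn} on the denominator of (\ref{eq:pos}), and the test construction) depend only on the uniform constants supplied by $\boldsymbol{A_2}$--$\boldsymbol{A_5}$ together with $(C_0,L_0)$ absorbed into $\epsilon_0$. Since the dependence on $\beta$ and $\boldsymbol{\theta}_0$ enters only through $\epsilon_n$ and $j_n,k_n$, which are treated uniformly for $\beta$ in the compact interval $[\beta_1,\beta_2]$, taking the supremum over the Sobolev class preserves the convergence to zero and yields the proposition.
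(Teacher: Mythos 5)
Your proposal is correct and follows essentially the same route as the paper's proof: both verify Condition $\boldsymbol{A_1}$ by substituting the Sobolev bound (\ref{eq:borne sob}) into hypothesis (\ref{eq:optimality}), reduce the two resulting inequalities to $j_n^{-2\beta}\lesssim\epsilon_n^2$, solve this together with $j_n=\lfloor j_0 n\epsilon_n^2/\log n\rfloor$ to obtain the announced rate, and observe that the constant $\epsilon_0$ and the uniformity assumptions on $\boldsymbol{A_2}$--$\boldsymbol{A_5}$ allow the supremum over $\beta\in[\beta_1,\beta_2]$ and $\boldsymbol{\theta}_0\in\Theta_\beta(L_0)$ to be taken inside Theorem~\ref{thm:apo}.
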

\begin{rem}
In the standard case where $d_{n}$ is the $l^2$ norm, $\epsilon_{n}$
is the \emph{optimal rate of contraction}, up to a $\log n$ term
(which is quite common in Bayesian nonparametric computations).
\end{rem}
\begin{proof}
Let $\beta\in[\beta_1,\beta_2]$ and $\boldsymbol{\theta}_0\in
\Theta_\beta(L_0)$. Then  $\boldsymbol{\theta}_0$ satisfies Equation
(\ref{eq:borne sob}), and Condition (\ref{eq:optimality}) implies
that\[ K\left(p_{0}^{(n)},p_{0j_{n}}^{(n)}\right)\leq
C_0L_{0}nj_{n}^{-2\beta},\
V_{m,0}\left(p_{0}^{(n)},p_{0j_{n}}^{(n)}\right)\leq
C_0L_{0}^{m}n^{m/2}j_{n}^{-m\beta}.\] 

For given $\boldsymbol{\theta}_0$ and $\beta$, the result of Theorem \ref{thm:apo} holds if Condition $\boldsymbol{A_1}$
is satisfied. This is the case if we choose
$\epsilon_{n}(\beta,\boldsymbol{\theta}_0)\geq C_0L_0 j_{n}^{-\beta}$, provided that the bounds in Conditions $\boldsymbol{A_2}$ - $\boldsymbol{A_5}$ and in Equation (\ref{eq:optimality}) are uniform. Combined with $j_{n}=\lfloor j_{0}n\epsilon_{n}^{2}/\log n\rfloor$,
it gives as a tight choice
$\epsilon_{n}(\beta,\boldsymbol{\theta}_0) =
\epsilon_{0}(\beta,\boldsymbol{\theta}_0) (\log
n/n)^{\beta/(2\beta+1)}$ with $\epsilon_{0}(\beta,\boldsymbol{\theta}_0) \leq (L_0C_0j_0^{-\beta})^{1/(2\beta+1)}$. 
So there exists a bound $\epsilon_0>0$ such
that
$\sup_{\beta_1\leq\beta\leq\beta_2}\sup_{\boldsymbol{\theta}_0\in
\Theta_\beta(L_0)}\epsilon_{0}(\beta,\boldsymbol{\theta}_0)=\epsilon_0<\infty$,
which concludes the proof.
\end{proof}

\subsection{Examples\label{sec:examples}}
In this section, we apply our results of contraction of Sections \ref{seq:conc} and
\ref{seq:ad} to a series of models. The Gaussian white noise example
is studied in detail in Section \ref{sec:app}. We suppose in each model that
$\boldsymbol{\theta}_0\in\Theta_{\beta}(L_0)$, where $\Theta_{\beta}(L_0)$ is defined in Equation (\ref{eq:sob}).

Throughout, we consider the following prior $\Pi$ on $\Theta$ (or on
a curve space $\mathcal{F}$ through the coefficients of the
functions in a basis).
 Let the prior distribution $\pi$ on $k$ be Poisson with parameter
 $\lambda$, and given $k$, the prior distribution on
 $\theta_j/\tau_j$, $j=1,\ldots,k$ be standard Gaussian,
\begin{align}k&\sim  \text{Poisson}(\lambda),\nonumber\\
\frac{\theta_j}{\tau_j}\,|\,k&\sim
\mathcal{N}(0,1),\,j=1,\ldots,k,\,\text{independently}.\label{eq:priorexample}\end{align}
It satisfies Equation (\ref{eq:prior k}) with function
$L(k)=\log(k)$ and Equation (\ref{eq:priorg}) with $\alpha=2$.
Choose then $\tau_{j}^{2}=\tau_0j^{-2q}$, $\tau_0>0$, with
$q>1/2$. It is decreasing and bounded from above by $\tau_0$ so
Equation (\ref{eq:maxtau}) is satisfied. Additionally,
\begin{equation*}\min_{j\leq k_{n}}\tau_{j}=\tau_{k_n}=k_n^{-2q}\geq n^{-H_2}\end{equation*}
for $H_2$ large enough, so Equation (\ref{eq:mintau}) is checked.
Since $\boldsymbol{\theta}_0\in\Theta_{\beta}(L_0)$,
\begin{equation*}
\tau_0^2\sum_{j=1}^{j_{n}}\theta_{0j}^2/\tau_{j}^{2}
=
\sum_{j=1}^{j_{n}}\theta_{0j}^2j^{2q}=\sum_{j=1}^{j_{n}}\theta_{0j}^2j^{2\beta}j^{2q-2\beta}
\leq j_n\sum_{j=1}^{j_{n}}\theta_{0j}^2j^{2\beta}
\leq j_nL_0,
\end{equation*}
as soon as $2q-2\beta\leq 1$. Hence by choosing $ 1/2<q\leq 1$,
Equation (\ref{eq:somme theta tau}) is verified for all $\beta >
1/2$. The prior $\Pi$ thus satisfies Condition $\boldsymbol{A_5}$.

Since Condition $\boldsymbol{A_5}$ is satisfied, we will show in the three examples that Conditions
$\boldsymbol{A_2}$ - $\boldsymbol{A_4}$ and Condition
(\ref{eq:optimality}) hold, thus Proposition \ref{prop:optimal}
applies: the posterior distribution attains the optimal rate of
contraction, up to a $\log n$ term, that is
$\epsilon_{n}=\epsilon_{0}(\log
n/n)^{\beta/(2\beta+1)}$, for a distance $d_n$ which is
specific to each model. This rate is adaptive in a range of smoothness $[\beta_1,\beta_2]$.

\subsubsection{Density}
Let us consider the density model in which the density
$\boldsymbol{p}$ is unknown, and we observe i.i.d. data
\begin{equation*}X_i\sim \boldsymbol{p},\quad
i=1,2,\ldots,n,\end{equation*}
where $\boldsymbol{p}$ belongs to  $\mathcal{F}$,\[
\mathcal{F}=\left\{ \boldsymbol{p}\text{ density on
}[0,1]:\,\boldsymbol{p}(0)=\boldsymbol{p}(1)\text{ and }\log
\boldsymbol{p}\in L^2(0,1)\right\}.\] Equality
$\boldsymbol{p}(0)=\boldsymbol{p}(1)$ is mainly used for ease of
computation. We define the parameter $\boldsymbol{\theta}$ of such a
function $\boldsymbol{p}$, and write
$\boldsymbol{p}=\boldsymbol{p}_{\boldsymbol{\theta}}$, as the
coefficients of $\log\boldsymbol{p}_{\boldsymbol{\theta}}$ in the
Fourier basis $\boldsymbol{\psi}=(\psi_{j})_{j\geq 1}$,
\textit{i.e.} it can be represented as
$$\log\boldsymbol{p}_{\boldsymbol{\theta}}(x)=\sum_{j=1}^\infty
\theta_j\psi_j(x) - c(\boldsymbol{\theta}),$$ where
$c(\boldsymbol{\theta})$ is a normalizing constant. We assign a
prior to $\boldsymbol{p}_{\boldsymbol{\theta}}$ by assigning the
sieve prior $\Pi$ of Equation (\ref{eq:priorexample}) to
$\boldsymbol{\theta}$.

A natural choice of  metric $d_{n}$ in this model is the Hellinger
distance
$d_{n}(\boldsymbol{\theta},\boldsymbol{\theta}')=h(\boldsymbol{p}_{\boldsymbol{\theta}},\boldsymbol{p}_{\boldsymbol{\theta}'})=\left(\int\left(\sqrt{\boldsymbol{p}_{\boldsymbol{\theta}}}-\sqrt{\boldsymbol{p}_{\boldsymbol{\theta}'}}\right)^{2}d\mu\right)^{1/2}$.
Lemma 2 in \citet{Ghosal:2007p267} shows the existence of tests
satisfying $\boldsymbol{A_4}$ with the Hellinger distance.

\citet{Rivoirard:2011} study this model in detail (Section 4.2.2) in
order to derive a Bernstein-von Mises theorem for the density model.
They prove that Conditions $\boldsymbol{A_2}$, $\boldsymbol{A_3}$
and (\ref{eq:optimality}) are valid in this model  (see the proof of
Condition (C) for $\boldsymbol{A_2}$ and (\ref{eq:optimality}), and
the proof of Condition (B) for $\boldsymbol{A_3}$). With
$D_{1}=D_{2}=1$, Condition $\boldsymbol{A_3}$
 is written $h(\boldsymbol{p}_{\boldsymbol{\theta}},\boldsymbol{p}_{\boldsymbol{\theta}'})\leq
D_0k_{n}\left\Vert
\boldsymbol{\theta}-\boldsymbol{\theta}'\right\Vert_{2,k_n}.$

\subsubsection{Regression}
Consider now the following nonparametric regression model\[
X_{i}=\boldsymbol{f}(t_{i})+\sigma\xi_{i},\quad i=1,\ldots, n,\]
with the regular fixed design $t_{i}=i/n$ in $[0,1]$, i.i.d.
centered Gaussian errors $\xi_{i}$ with variance  $\sigma^2$.
{
The unknown $\sigma$ case is studied in an unpublished paper by Rousseau and Sun. They endow $\sigma$ with an Inverse Gamma (conjugate) prior. They show that this one dimensional parameter adds an $n\log(\sigma/\sigma_0)$ term in the Kullback-Leibler divergence but does not alter the rates by considering three different cases for $\sigma$, either $\sigma < \sigma_0/2$, $\sigma > 3\sigma_0/2$, or $\sigma \in [\sigma_0/2,3\sigma_0/2]$.}

We consider now in more detail the $\sigma$ known case.
Denote $\boldsymbol{\theta}$ the coefficients of a regression function
$\boldsymbol{f}$  in the Fourier basis
$\boldsymbol{\psi}=(\psi_{j})_{j\geq 1}$. So for all $t\in[0,1]$,
$\boldsymbol{f}$ can be represented as
$\boldsymbol{f}(t)=\sum_{j=1}^\infty \theta_j\psi_j(t)$. We assign a
prior to $\boldsymbol{f}$ by assigning the sieve prior $\Pi$ of
Equation (\ref{eq:priorexample}) to $\boldsymbol{\theta}$.

Let $\mathbb{P}_t^n=n^{-1}\sum_{i=1}^n\delta_{t_{i}}$ be the
empirical measure of the covariates $t_{i}$'s, and define the square
of the empirical norm by
$\Vert\boldsymbol{f}\Vert_{\mathbb{P}_t^n}^2=n^{-1} \sum_{i=1}^n
\boldsymbol{f}^2(t_{i})$. We use
$d_n=\Vert\,\cdot\,\Vert_{\mathbb{P}_t^n}$.

Let $\boldsymbol{\theta}\in\Theta$ and $\boldsymbol{f}$ the
corresponding regression. Basic algebra \citep[see for example Lemma
1.7 in][]{tsybakov2009introduction} provides, for any two $j$ and
$k$,
$$\frac{1}{n}\sum_{i=1}^n\psi_j(t_{i})\psi_k(t_{i})=\delta_{jk},$$
where  $\delta_{jk}$ stands for Kronecker delta. Hence
\begin{equation}\label{eq:approxemp}\Vert\boldsymbol{f}\Vert_{\mathbb{P}_t^n}^2=\frac{1}{n}\sum_{i=1}^n\sum_{j,k}\theta_j\theta_k\psi_j(t_{i})\psi_k(t_{i})=
\Vert\boldsymbol{\theta}\Vert_2^2=\Vert\boldsymbol{f}\Vert_2^2,
\end{equation}
where the last equality is Parseval's. It ensures Condition
$\boldsymbol{A_3}$ with $D_0=D_2=1$ and $D_1=0$.

The densities $\mathcal{N}(\boldsymbol{f}(t_{i}),\sigma^2)$ of
$X_i$'s are denoted $p_{\boldsymbol{f},i}$, $i=1,\ldots,n$, and
their product $p_{\boldsymbol{f}}^{(n)}$. The quantity $\boldsymbol{f}_{0j_n}$
denotes the truncated version of $\boldsymbol{f}_0$ to its first
$j_n$ terms in the Fourier basis.

 We have
$2K(p_{\boldsymbol{f}_0,i},p_{\boldsymbol{f},i}) =
V_{2,0}(p_{\boldsymbol{f}_0,i},p_{\boldsymbol{f},i}) =
\sigma^{-2}(\boldsymbol{f}_0(t_{i}) - \boldsymbol{f}(t_{i}))^2$ and
$V_{m,0}(p_{\boldsymbol{f}_0,i},\allowbreak p_{\boldsymbol{f},i}) =
\allowbreak \sigma_m\sigma^{m-2}\vert\boldsymbol{f}_0(t_{i}) -
\boldsymbol{f}(t_{i})\vert^2$ for $m\geq 2$, where $\sigma_m$ is the
(non centered) $m^{\text{th}}-$moment of a standard Gaussian
variable. So using Equation (\ref{eq:approxemp}) we get
$$2K(p_{\boldsymbol{f}_0}^{(n)},p_{\boldsymbol{f}}^{(n)}) =
V_{2,0}(p_{\boldsymbol{f}_0}^{(n)},p_{\boldsymbol{f}}^{(n)}) =
n\sigma^{-2}\Vert\boldsymbol{f}_0 -
\boldsymbol{f}\Vert_{\mathbb{P}_t^n}^2 = n\sigma^{-2} \Vert
\boldsymbol{\theta}_{0}-\boldsymbol\theta\Vert_2^2  $$ which proves Condition (\ref{eq:optimality}).

Additionally, both
$2\widetilde{K}(p_{\boldsymbol{f}_{0j_{n}}}^{(n)},p_{\boldsymbol{f}}^{(n)})$
and
$\widetilde{V}_{2,0}(p_{\boldsymbol{f}_{0j_{n}}}^{(n)},p_{\boldsymbol{f}}^{(n)})$
are upper bounded by $n\sigma^{-2}(2\Vert\boldsymbol{f}_{0j_{n}} -
\boldsymbol{f}\Vert_{\mathbb{P}_t^n}^2+\Vert\boldsymbol{f}_0 -
\boldsymbol{f}_{0j_{n}}\Vert_{\mathbb{P}_t^n}^2)$. Let
$\boldsymbol{\theta} \in \mathcal{A}_{n}(H_1)$, for a certain
$H_1>0$. Then, using (\ref{eq:approxemp}) again, the bound is less
than
$$n\sigma^{-2}(n^{-H_1} + L_0j_{n}^{-2\beta})\leq Cn\epsilon_n^2$$
for  $H_1>2\beta/(2\beta+1)$, which ensures Condition
$\boldsymbol{A_2}$.

\citet{Ghosal:2007p267} state in Section 7.7 that tests satisfying
$\boldsymbol{A_4}$ exist with
$d_n=\Vert\,\cdot\,\Vert_{\mathbb{P}_t^n}$.

\subsubsection{Nonlinear AR(1) model\label{AR1}}
As a nonindependent illustration, we consider the following  Markov
chain: the nonlinear autoregression model whose observations
$X^n=(X_1,\ldots,X_n)$ come from a stationary time series ${X_t,t\in
\mathbb{Z}}$, such that
$$X_i=\boldsymbol{f}(X_{i-1})+\xi_i,\quad i=1,2,\ldots,n,$$
where the function $\boldsymbol{f}$ is unknown and the residuals
$\xi_i$ are standard Gaussian and independent of
$(X_1,\ldots,X_{i-1})$. We suppose that $X_0$ is drawn in the
stationary distribution.

Suppose that regression functions $\boldsymbol{f}$ are in $L_2(\R)$,
and uniformly bounded by a constant $M_1$ (a bound growing with $n$
could also be considered here). We use Hermite functions
$\boldsymbol{\psi}=(\psi_{j})_{j\geq 1}$ as an orthonormal basis of
$\R$, such that for all $x\in\R$,
$\boldsymbol{f}(x)=\boldsymbol{f}_{\boldsymbol{\theta}}(x)=\sum_{j=1}^\infty
\theta_j\psi_j(x)$. This basis is uniformly bounded (by Cram\'er's
inequality). {Consider the sieve prior $\Pi$ in its truncated version
(\ref{eq:gntrunc}) for $\boldsymbol{\theta}$, with radius $r_1$ a (possibly large) constant independent of $k$ and $n$.}

We show that Conditions $\boldsymbol{A_1}$-$\boldsymbol{A_4}$ are
satisfied, along the lines of \citet{Ghosal:2007p267} Sections 4 and
7.4. Denote
$p_{\boldsymbol{\theta}}(y|x)=\varphi(y-\boldsymbol{f}_{\boldsymbol{\theta}}(x))$
the transition density of the chain, where $\varphi(\,\cdot\,)$ is
the standard normal density distribution, and where reference
measures relative to $x$ and $y$ are denoted respectively by $\nu$
and $\mu$. Define $r(y)=\frac{1}{2}(\varphi(y-M_1)+\varphi(y+M_1))$,
and set $d\nu=rd\mu$. Then \citet{Ghosal:2007p267} show that the
chain $(X_i)_{1\leq i \leq n}$ has a unique stationary distribution
and prove the existence of tests satisfying $\boldsymbol{A_4}$
relative to the Hellinger semidistance $d$ whose square is given by
$$d^2(\boldsymbol{\theta},\boldsymbol{\theta'}) = \int\int\left(\sqrt{p_{\boldsymbol{\theta}}(y|x)}-\sqrt{p_{\boldsymbol{\theta'}}(y|x)}\right)^2d\mu(y)d\nu(x).$$
They show that $d$ is bounded by
$\Vert\,\cdot\,\Vert_2$ (which proves Condition $\boldsymbol{A_3}$)
and that
$$K(p_0,p_{\boldsymbol{\theta}}) =
V_{2,0}(p_0,p_{\boldsymbol{\theta}}) \lesssim
\Vert\boldsymbol\theta_0 - \boldsymbol{\theta}\Vert_2^2.
$$
Thus Equation (\ref{eq:optimality}) holds. Condition
$\boldsymbol{A_2}$ follows from  inequalities
$\widetilde{K}(p_{0j_{n}},p_{\boldsymbol{\theta}})\lesssim
\sum_{j=1}^{j_{n}}|\theta_{0j}-\theta_j|$ and
$\widetilde{V}_{2,0}(p_{0j_{n}},p_{\boldsymbol{\theta}})\lesssim
\Vert\boldsymbol\theta_{0j_{n}} -
\boldsymbol{\theta}\Vert_{2,j_{n}}^2$ for
$\boldsymbol{\theta}\in\Theta_{j_{n}}$.

\section{Application to the white noise model\label{sec:app}}
Consider the Gaussian white noise model
\begin{equation}
dX^{n}(t)=\boldsymbol{f}_{0}(t)dt+\frac{1}{\sqrt{n}}dW(t),\quad0\leq
t\leq1,\label{eq:wnm2}\end{equation} in which we observe processes $X^{n}(t)$, where
$\boldsymbol{f}_{0}$ is the unknown function of interest belonging
to $L^2(0,1)$, $W(t)$ is a standard Brownian motion, and $n$ is the
sample size. We assume that $\boldsymbol{f}_{0}$ lies in a Sobolev
ball, $\Theta_{\beta}(L_0)$, see (\ref{eq:sob}).
 \citet{Brown:1996p21} show that this model is asymptotically equivalent
to the nonparametric regression (assuming $\beta>1/2$). It can be
written as the equivalent infinite normal mean model using the
decomposition in a Fourier basis
$\boldsymbol{\psi}=(\psi_{j})_{j\geq 1}$ of $L^2(0,1)$,
\begin{equation}
X_{j}^{n}=\theta_{0j}+\frac{1}{\sqrt{n}}\xi_{j},\, j=1,2,\ldots\label{eq:wnm}
\end{equation}
where $X_{j}^{n}=\int_{0}^{1}\psi_{j}(t)\, dX^{n}(t)$ are the
observations, $\theta_{0j}=\int_{0}^{1}\psi_{j}(t)\,
\boldsymbol{f}_{0}(t)dt$ the Fourier coefficients of
$\boldsymbol{f}_{0}$, and $\xi_{j}=\int_{0}^{1}\psi_{j}(t)dW(t)$ are
independent standard Gaussian random variables. The function
$\boldsymbol{f}_{0}$ and the parameter $\boldsymbol{\theta}_{0}$ are
linked through the relation in $L^2(0,1)$,
$\boldsymbol{f}_{0}=\sum_{j=1}^{\infty}\theta_{0j}\psi_{j}$.

In addition to results in concentration, we are interested in
comparing the risk of an estimate $\hat{\boldsymbol{f}}_{n}$
corresponding to basis coefficients $\hat{\boldsymbol{\theta}}_{n}$,
under two different losses: the global $L^2$ loss (if expressed on
curves $\boldsymbol{f}$, or $l^2$ loss if expressed on
$\boldsymbol{\theta}$),
\begin{eqnarray*} R_{n}^{L^2}(\boldsymbol{\theta}_{0}) =
\E_{0}^{(n)}\left\Vert \hat{\boldsymbol{f}}_{n}-\boldsymbol{f}_{0}\right\Vert_2
^{2}=\E_{0}^{(n)}\sum_{j=1}^{\infty}\left(\hat{\theta}_{nj}-\theta_{0j}\right)^{2},\end{eqnarray*}
and the local loss at point $t\in[0,1]$,
\begin{eqnarray*}
R_{n}^{\text{loc}}(\boldsymbol{\theta}_{0},t)=\E_{0}^{(n)}\left(\hat{\boldsymbol{f}}_{n}(t)-\boldsymbol{f}_{0}(t)\right)^{2}=\E_{0}^{(n)}\left(\sum_{j=1}^{\infty}a_{j}\left(\hat{\theta}_{nj}-\theta_{0j}\right)\right)^{2},\end{eqnarray*}
with $a_{j}=\psi_{j}(t)$. Note that the difference between global
and local risks expressions in basis coefficients comes from the
parenthesis position with respect to the square: respectively the
sum of squares and the square of a sum.

We show that sieve priors allow to construct adaptive estimate in
global risk. However, the same estimate does not perform as well
under the pointwise loss, which illustrates the result of
\citet{Cai:2007p90}. We provide a lower bound for the pointwise
rate.

\subsection{Adaptation under global loss}
Consider the global $l^2$ loss on $\boldsymbol{\theta}_{0}$. The
likelihood ratio is given by
\[
\frac{p_{0}^{(n)}}{p_{\boldsymbol{\theta}}^{(n)}}(X^n)=\exp\left(n\langle\boldsymbol{\theta}_{0}-\boldsymbol{\theta},X^{n}\rangle-\frac{n}{2}\left\Vert
\boldsymbol{\theta}_{0}\right\Vert_2 ^{2}+\frac{n}{2}\left\Vert
\boldsymbol{\theta}\right\Vert_2 ^{2}\right),\] where
$\langle.,.\rangle$ denotes the $l^2$ scalar product. We choose here
the $l^2$ distance as
$d_{n}(\boldsymbol{\theta},\boldsymbol{\theta}')=\left\Vert
\boldsymbol{\theta}-\boldsymbol{\theta}'\right\Vert_2 $. Let us check that Conditions
$\boldsymbol{A_2}$ - $\boldsymbol{A_4}$ and Condition
(\ref{eq:optimality}) hold.

The choice of $d_n$ ensures Condition $\boldsymbol{A_3}$ with $D_{0}=D_2=1$ and
$D_{1}=0$. The test statistic of $\boldsymbol{\theta}_{0}$ against
$\boldsymbol{\theta}_{1}$ associated with the likelihood ratio is
$\phi_{n}(\boldsymbol{\theta}_{1})=\ind(2\langle\boldsymbol{\theta}_{1}-\boldsymbol{\theta}_{0},X^{n}\rangle>\left\Vert
\boldsymbol{\theta}_{1}\right\Vert_2 ^{2}-\left\Vert
\boldsymbol{\theta}_{0}\right\Vert_2 ^{2})$. With Lemma 5 of
\citet{Ghosal:2007p267} we have that
$\E_{0}^{(n)}(\phi_{n}(\boldsymbol{\theta}_{1}))\leq e^{-n\left\Vert
\boldsymbol{\theta}_{1}-\boldsymbol{\theta}_{0}\right\Vert_2
^{2}/4}$ and
$\E_{\boldsymbol{\theta}}^{(n)}\left(1-\phi_{n}(\boldsymbol{\theta}_{1})\right)\leq
e^{-n\left\Vert
\boldsymbol{\theta}_{1}-\boldsymbol{\theta}_{0}\right\Vert_2
^{2}/4}$ for $\boldsymbol{\theta}$ such that $\left\Vert
\boldsymbol{\theta}-\boldsymbol{\theta}_{1}\right\Vert_2
\leq\left\Vert
\boldsymbol{\theta}_{1}-\boldsymbol{\theta}_{0}\right\Vert_2 /4$. It
provides a test as in Condition $\boldsymbol{A_4}$ with
$c_{1}=\zeta=1/4$.

Moreover, following Lemma 6 of \citet{Ghosal:2007p267} we have
\begin{equation*}
K\left(p_{0}^{(n)},p_{\boldsymbol{\theta}}^{(n)}\right)=n\left\Vert
\boldsymbol{\theta}-\boldsymbol{\theta}_{0}\right\Vert_2 ^{2}/2\text{ and
}V_{2,0}\left(p_{0}^{(n)},p_{\boldsymbol{\theta}}^{(n)}\right)=n\left\Vert
\boldsymbol{\theta}-\boldsymbol{\theta}_{0}\right\Vert_2 ^{2}.\end{equation*}
For $m\geq2$, we have\begin{eqnarray*}
V_{m,0}\left(p_{0}^{(n)},p_{\boldsymbol{\theta}}^{(n)}\right) & = & \int p_{0}^{(n)}\left|\log\left(p_{0}^{(n)}/p_{\boldsymbol{\theta}}^{(n)}\right)-K\left(p_{0}^{(n)},p_{\boldsymbol{\theta}}^{(n)}\right)\right|^{m}d\mu\\
 & = & n^{m}\int p_{0}^{(n)}\left|\langle\boldsymbol{\theta}_{0}-\boldsymbol{\theta},X^{n}-\boldsymbol{\theta}_{0}\rangle\right|^{m}d\mu\\
 & \leq & n^{m}\left\Vert \boldsymbol{\theta}_{0}-\boldsymbol{\theta}\right\Vert_2 ^{m}\int p_{0}^{(n)}\left\Vert X^{n}-\boldsymbol{\theta}_{0}\right\Vert_2 ^{m}d\mu.\end{eqnarray*}
The centered $m^{\text{th}}-$moment of the Gaussian variable $X^{n}$
is proportional to $n^{-m/2}$, so
$V_{m,0}\left(p_{0}^{(n)},p_{\boldsymbol{\theta}}^{(n)}\right)\lesssim
n^{m/2}\left\Vert
\boldsymbol{\theta}_{0}-\boldsymbol{\theta}\right\Vert_2 ^{m}$, and
Condition (\ref{eq:optimality}) is satisfied. The same calculation
shows that Condition $\boldsymbol{A_2'}$ is satisfied: for all
$\boldsymbol{\theta}\in\Theta_{j_{n}}$,
$\widetilde{K}\left(p_{0j_{n}}^{(n)},p_{\boldsymbol{\theta}}^{(n)}\right)=\frac{n}{2}\left\Vert
\boldsymbol\theta_{0j_{n}}-\boldsymbol{\theta}\right\Vert_{2,j_{n}}
^{2}$ and
$\widetilde{V}_{m,0}\left(p_{0j_{n}}^{(n)},p_{\boldsymbol{\theta}}^{(n)}\right)\lesssim
n^{m/2}\left\Vert
\boldsymbol\theta_{0j_{n}}-\boldsymbol{\theta}\right\Vert_{2,j_{n}}
^{m}$.

Conditions $\boldsymbol{A_2}$ - $\boldsymbol{A_4}$ and Condition
(\ref{eq:optimality}) hold, if moreover $\boldsymbol{A_4}$ is satisfied, then by Proposition \ref{prop:optimal},
the procedure is adaptive, which is expressed in the following
proposition.
\begin{prop}
Under the prior $\Pi$ defined in Equations (\ref{eq:priorexample}),
the global $l^2$ rate of posterior contraction is optimal adaptive
for the Gaussian white noise model, \textit{i.e.} for $M$ large
enough and $\beta_2\geq\beta_1>1/2$\[
\sup_{\beta_1\leq\beta\leq\beta_2}\sup_{\boldsymbol{\theta}_0\in
\Theta_\beta(L_0)}
\E_{0}^{(n)}\Pi\left(\boldsymbol{\theta}:\,\left\Vert
\boldsymbol{\theta}-\boldsymbol{\theta}_{0}\right\Vert_2 ^{2}\geq
M\frac{\log n}{L(n)}\epsilon_{n}^{2}(\beta)\vert
X^{n}\right)\underset{n\rightarrow\infty}{\longrightarrow}0,\] with
$\epsilon_{n}(\beta) = \epsilon_{0}\left(\frac{\log
n}{n}\right)^{\frac{\beta}{2\beta+1}}$.
\end{prop}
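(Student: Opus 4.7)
The plan is to invoke Proposition \ref{prop:optimal} directly, since essentially all of its hypotheses have already been verified in the paragraphs immediately preceding the proposition statement. What remains is mainly bookkeeping: to check that the constants appearing in Conditions $\boldsymbol{A_2}$ - $\boldsymbol{A_5}$ and in (\ref{eq:optimality}) do not depend on the particular pair $(\beta,\boldsymbol{\theta}_0)\in [\beta_1,\beta_2]\times\Theta_\beta(L_0)$, so that the contraction is uniform on this set.

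First I would recall that Condition $\boldsymbol{A_5}$ for the prior (\ref{eq:priorexample}) was established in Section \ref{sec:examples} for $1/2<q\leq 1$: the constants $a,b$ and the slow varying function $L(k)=\log k$ come from the Poisson tail of $\pi$, the constants $G_1,\ldots,G_4$ and $\alpha=2$ come from the standard Gaussian density $g$, and the bounds (\ref{eq:maxtau})-(\ref{eq:mintau}) are absolute. Only (\ref{eq:somme theta tau}) actually uses $\boldsymbol{\theta}_0$, but the bound derived there is $\sum_{j\leq j_n}\theta_{0j}^2/\tau_j^2\leq j_n L_0/\tau_0$, which depends only on the Sobolev radius $L_0$ and the constant $\tau_0$ (not on $\boldsymbol{\theta}_0$ or $\beta$ individually).

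Next I would invoke the verifications done just before the proposition statement: Condition $\boldsymbol{A_3}$ holds with $D_0=D_2=1$ and $D_1=0$ because $d_n=\|\cdot\|_2$; Condition $\boldsymbol{A_4}$ holds with the universal constants $c_1=\zeta=1/4$ via the likelihood ratio test; Condition (\ref{eq:optimality}) follows from the exact identities $K(p_0^{(n)},p_{\boldsymbol{\theta}}^{(n)})=n\|\boldsymbol{\theta}-\boldsymbol{\theta}_0\|_2^2/2$ and the explicit Gaussian moment bound $V_{m,0}(p_0^{(n)},p_{\boldsymbol{\theta}}^{(n)})\lesssim n^{m/2}\|\boldsymbol{\theta}-\boldsymbol{\theta}_0\|_2^m$; and the stronger Condition $\boldsymbol{A_2'}$ (hence $\boldsymbol{A_2}$) follows from the same computation applied on the projected sieve space $\Theta_{j_n}$. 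All the constants produced in these checks are absolute, so a fortiori uniform on $[\beta_1,\beta_2]\times\Theta_\beta(L_0)$.

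With every hypothesis of Proposition \ref{prop:optimal} verified uniformly in $(\beta,\boldsymbol{\theta}_0)$, the conclusion of the proposition yields the announced uniform posterior contraction at the rate $\epsilon_n(\beta)=\epsilon_0(\log n/n)^{\beta/(2\beta+1)}$ with a single $\epsilon_0$ that depends on $L_0$ and the constants of the model but not on $\beta$. The only substantive obstacle — which is essentially trivial in the white noise model because of the exact Gaussian identities — is the uniformity of the constants; once that is granted, the proof reduces to a citation of Proposition \ref{prop:optimal}.
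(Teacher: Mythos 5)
Your proposal is correct and matches the paper's own approach: the paper likewise verifies Conditions $\boldsymbol{A_3}$, $\boldsymbol{A_4}$, $\boldsymbol{A_2'}$ and (\ref{eq:optimality}) explicitly for the white noise model in the paragraphs preceding the proposition, recalls that $\boldsymbol{A_5}$ for the prior (\ref{eq:priorexample}) was established in Section \ref{sec:examples}, and then deduces the result by citing Proposition \ref{prop:optimal}. Your added remark on the uniformity of the constants over $(\beta,\boldsymbol{\theta}_0)\in[\beta_1,\beta_2]\times\Theta_\beta(L_0)$ merely makes explicit the hypothesis of Proposition \ref{prop:optimal} that the paper invokes implicitly, and is a welcome clarification rather than a deviation.
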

The distance here is not bounded, so Corollary \ref{cor:point} does
not hold. For deriving a risk rate, we need a more subtle result
than Theorem \ref{thm:apo} that we can obtain when considering sets
$\mathcal{S}_{n,j}(M) = \left\{ \boldsymbol{\theta}:M\frac{\log
n}{L(n)} (j+1)\epsilon_{n}^{2} \geq \left\Vert
\boldsymbol{\theta}-\boldsymbol{\theta}_{0}\right\Vert_2 ^{2}\geq M
\frac{\log n}{L(n)} j\epsilon_{n}^{2}\right\}$, $j=1,2,\ldots$
instead of $\mathcal{S}_{n}(M) = \left\{
\boldsymbol{\theta}:\left\Vert
\boldsymbol{\theta}-\boldsymbol{\theta}_{0}\right\Vert_2 ^{2}\geq M
\frac{\log n}{L(n)} \epsilon_{n}^{2}\right\} $. Then the bound of
the expected posterior mass of $\mathcal{S}_{n,j}(M)$ becomes
\begin{equation}
\E_{0}^{(n)}\Pi(\mathcal{S}_{n,j}(M)|X^n)\leq
c_{7}\left(nj\epsilon_{n}^{2}\right)^{-m/2}\label{eq:j}\end{equation}
for a fixed constant $c_{7}$. Hence we obtain the following rate of
convergence in risk.
\begin{prop}
\label{cor:point-1}Under Condition (\ref{eq:optimality}) with
$m\geq5$, the expected posterior risk given
$\boldsymbol{\theta}_{0}$ and $\Pi$ converges at least at the same
rate $\epsilon_{n}$
\[
\mathcal{R}_{n}^{L^2}(\boldsymbol{\theta}_{0})=\E_{0}^{(n)}\Pi\left[\left\Vert
\boldsymbol{\theta}-\boldsymbol{\theta}_{0}\right\Vert_2 ^{2}\vert
X^{n}\right]=\go\left(\epsilon_{n}^{2}\right),\] for any
$\boldsymbol{\theta}_{0}$. So the procedure is risk adaptive as well
(up to a $\log(n)$ term).\end{prop}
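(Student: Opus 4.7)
The plan is to bound $\mathcal{R}_n^{L^2}(\boldsymbol{\theta}_0) = \mathbb{E}_0^{(n)}\Pi[\|\boldsymbol{\theta}-\boldsymbol{\theta}_0\|_2^2\mid X^n]$ by a peeling (or slicing) argument, exactly as hinted in the paragraph preceding the statement. The issue is that the $l^2$ loss is unbounded, so we cannot apply Corollary \ref{cor:point} directly; however, the slices $\mathcal{S}_{n,j}(M)$ introduced just above the statement exhaust the unbounded region and have exponentially decaying posterior mass, which is enough to make the tail series converge provided the moment index $m$ is large enough.

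First I would write
\begin{equation*}
\mathcal{R}_{n}^{L^2}(\boldsymbol{\theta}_0)
\;\leq\; M\tfrac{\log n}{L(n)}\epsilon_n^2 \;+\; \sum_{j\geq 1}\mathbb{E}_0^{(n)}\Pi\Bigl(\|\boldsymbol{\theta}-\boldsymbol{\theta}_0\|_2^2\,\mathbb{I}(\mathcal{S}_{n,j}(M))\,\Big|\,X^n\Bigr),
\end{equation*}
where the first term accounts for the small ball $\{\|\boldsymbol{\theta}-\boldsymbol{\theta}_0\|_2^2 \leq M\log(n)/L(n)\,\epsilon_n^2\}$ and the sum covers its complement via the annular decomposition $\mathcal{S}_{n,j}(M)$. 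On each slice the diameter bound gives $\|\boldsymbol{\theta}-\boldsymbol{\theta}_0\|_2^2 \leq M\log(n)/L(n)\,(j+1)\epsilon_n^2$.

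Next, plugging in the key refined bound (\ref{eq:j}) obtained in the remark above, namely $\mathbb{E}_0^{(n)}\Pi(\mathcal{S}_{n,j}(M)\mid X^n)\leq c_7(nj\epsilon_n^2)^{-m/2}$, the tail sum is controlled by
\begin{equation*}
\sum_{j\geq 1}M\tfrac{\log n}{L(n)}(j+1)\epsilon_n^2 \cdot c_7(nj\epsilon_n^2)^{-m/2}
\;\leq\; C\tfrac{\log n}{L(n)}\epsilon_n^2\,(n\epsilon_n^2)^{-m/2}\sum_{j\geq 1}(j+1)j^{-m/2}.
\end{equation*}
The numerical series $\sum_j (j+1)j^{-m/2}$ converges as soon as $m/2 > 2$, which is precisely why $m\geq 5$ is required; this is the one place where the hypothesis on $m$ enters. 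The remaining factor $(n\epsilon_n^2)^{-m/2}\to 0$, so the tail contribution is $o(\epsilon_n^2)$, and since in the white noise application $L(n)=\log n$ under the Poisson prior of (\ref{eq:priorexample}), the leading term is $O(\epsilon_n^2)$, giving the claim.

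The main obstacle is establishing (\ref{eq:j}) itself, i.e.\ the $j$-dependent refinement of Theorem \ref{thm:apo}. This is not a tail estimate on $\Pi(\{d_n^2\geq M\epsilon_n^2\}\mid X^n)$ but a localized one on a shell at radius $\sqrt{j}\,\epsilon_n$, and requires that the tests from Condition $\boldsymbol{A_4}$ be applied at each $\boldsymbol{\theta}_1\in\mathcal{S}_{n,j}(M)$ to yield errors of order $\exp(-c_1 n j \epsilon_n^2)$, which then feed into the Markov bound and produce the factor $j^{-m/2}$ in addition to $(n\epsilon_n^2)^{-m/2}$. Once (\ref{eq:j}) is granted, the rest is the elementary slicing and summability check described above.
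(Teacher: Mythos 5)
Your proof is correct and follows essentially the same argument as the paper: the same small-ball-plus-shell decomposition using $\mathcal{S}_n(M)$ and $\mathcal{S}_{n,j}(M)$, the same appeal to the bound (\ref{eq:j}) taken as given, and the same summability check that $\sum_j (j+1)j^{-m/2}<\infty$ requires $m\geq 5$. Your added remark that $L(n)=\log n$ under the Poisson prior (\ref{eq:priorexample}), so that $\log n/L(n)=1$ and the leading term is indeed $\mathcal{O}(\epsilon_n^2)$, just makes explicit an implicit step in the paper.
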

\begin{proof}
We have\begin{eqnarray*}
\mathcal{R}_{n}^{L^2}(\boldsymbol{\theta}_{0}) & \leq &
\E_{0}^{(n)}\Pi\left[\left( \ind(\boldsymbol{\theta}\notin
\mathcal{S}_{n}(M)) + \sum_{j\geq 1}\ind(\boldsymbol{\theta}\in
\mathcal{S}_{n,j}(M))
\right)\left\Vert \boldsymbol{\theta}-\boldsymbol{\theta}_{0}\right\Vert_2 ^{2}\vert X^{n}\right]\\
 & \leq & M \frac{\log n}{L(n)} \epsilon_{n}^{2}\left(1+\sum_{j=1}^{\infty}(j+1)\E_{0}^{(n)}\Pi(\Sc_{n,j}(M)\vert X^{n})\right).\end{eqnarray*}

Due to (\ref{eq:j}), the last sum in $j$ converges as soon as
$m\geq5$. This is possible in the white noise setting because the
conditions are satisfied whatever $m$. So
$\mathcal{R}_{n}^{L^2}(\boldsymbol{\theta}_{0})=\go\left(\epsilon_{n}^{2}\right)$.
\end{proof}

We have shown that conditional to the existence of a sieve prior for
the white noise model satisfying $\boldsymbol{A_5}$ (\emph{cf.}
Section \ref{sec:examples}), the procedure has minimax rates (up to
a $\log(n)$ term) both in contraction and in risk. We now study the
asymptotic behaviour of the posterior under the local loss function.

\subsection{Lower bound under pointwise loss}

The previous section derives rates of convergence under the global
loss. Here, under the pointwise loss, we show that the risk
deteriorates as a power $n$ factor compared to the benchmark minimax
pointwise risk $n^{-(2\beta-1)/2\beta}$ (note the difference
with the global minimax rate $n^{-2\beta/(2\beta+1)}$, both
given for risks on squares). We use the sieve prior defined as a
conditional Gaussian prior in Equation (\ref{eq:priorexample}).
Denote by $\hat{\boldsymbol{\theta}}_n$ the Bayes estimate of
$\boldsymbol{\theta}$ (the posterior mean). Then the
following proposition gives a lower bound on the risk (pointwise square
error) under a pointwise loss:
\begin{prop}
\label{pro:Under-pointwise-loss,}If the point $t$ is such that
$a_j=\psi_j(t)=1$ for all $j$ ($t=0$), then for all $\beta\geq q$,
for all $L_0>0$, a lower bound on the risk rate under pointwise loss
is given by\[
\sup_{\boldsymbol{\theta}_{0}\in\Theta_{\beta}(L_{0})}R_{n}^{\emph{loc}}(\boldsymbol{\theta_0},t)\gtrsim n^{-\frac{2\beta-1}{2\beta+1}}/\log^{2}n.\]
\end{prop}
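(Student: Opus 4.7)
The plan is to exploit the sieve structure of the prior: since $\theta_j=0$ almost surely under the prior for $j>k$, the Bayes estimator $\hat{\boldsymbol{\theta}}_n$ truncates the high-frequency coefficients, and at the single point $t=0$ (where $\psi_j(0)=1$) these truncation errors add up \emph{coherently} across $j$ into a bias whose square drives the rate. I fix a worst-case $\boldsymbol{\theta}_0^*\in\Theta_{\beta}(L_0)$ near the boundary of the Sobolev ball and use the bias--variance lower bound
$$R_n^{\text{loc}}(\boldsymbol{\theta}_0^*,t)\geq\bigl(\mathbb{E}_0^{(n)}\hat{\boldsymbol{f}}_n(0)-\boldsymbol{f}_0^*(0)\bigr)^2$$
to reduce the problem to lower bounding an absolute bias.

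By Gaussian conjugacy in model~(\ref{eq:wnm}), conditional on $k$ the marginal posterior on $\theta_j$ is $\mathcal{N}\!\bigl(\tfrac{n\tau_j^2}{1+n\tau_j^2}X_j^n,\tfrac{\tau_j^2}{1+n\tau_j^2}\bigr)$ for $j\leq k$ and a point mass at $0$ for $j>k$; averaging against the posterior on $k$ gives the closed form
$$\hat{\theta}_{nj}=\frac{n\tau_j^2}{1+n\tau_j^2}\,X_j^n\,\Pi(k\geq j\mid X^n),\qquad j\geq 1.$$
I take as adversarial parameter $\theta_{0j}^*=c_0\,j^{-\beta-1/2}(\log j)^{-1}$ for $j\geq 2$ (and $\theta_{01}^*=0$), with $c_0>0$ small enough that $\boldsymbol{\theta}_0^*\in\Theta_{\beta}(L_0)$; then $\sum_{j>K}\theta_{0j}^*\asymp K^{1/2-\beta}/\log K$ exactly. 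Next, using the global $l^2$ adaptation of Proposition~\ref{prop:optimal} together with the prior-mass control of Lemma~\ref{lem:prior_mass} (as in the proof of Theorem~\ref{thm:apo}), the posterior on $k$ concentrates on $\{k\leq K_n\}$ for some $K_n$ of order $n^{1/(2\beta+1)}$ up to polylogarithmic factors, i.e.\ $\Pi(k>K_n\mid X^n)\to 0$ in $P_{\boldsymbol{\theta}_0^*}^{(n)}$-probability; otherwise the variance contribution $k/n$ would be incompatible with the $l^2$ rate $\epsilon_n(\beta)^2$.

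Decomposing $\hat{\boldsymbol{f}}_n(0)-\boldsymbol{f}_0^*(0)$ at the cutoff $K_n$ produces three pieces: (i) the linear estimation error for the first $K_n$ coefficients, (ii) the truncation term $-\sum_{j>K_n}\theta_{0j}^*$, and (iii) a high-frequency residual $\sum_{j>K_n}\hat{\theta}_{nj}$ whose $P_0^{(n)}$-mean is controlled via Cauchy--Schwarz, using $\mathbb{E}_0^{(n)}(X_j^n)^2=\theta_{0j}^{*2}+1/n$ together with the smallness of $\mathbb{E}_0^{(n)}\Pi(k\geq j\mid X^n)$ for $j>K_n$, making it $o(1)$ times the tail of $\boldsymbol{\theta}_0^*$. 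Under the hypothesis $\beta\geq q$ the shrinkage factor $\frac{n\tau_j^2}{1+n\tau_j^2}$ is essentially $1$ uniformly for $j\leq K_n$, so (i) contributes a mean-zero noise of variance $O(K_n/n)=O(n^{-2\beta/(2\beta+1)}\,\text{polylog}(n))$ plus a vanishing shrinkage bias, both strictly of smaller order than $\bigl(\sum_{j>K_n}\theta_{0j}^*\bigr)^2\asymp n^{-(2\beta-1)/(2\beta+1)}/\log^2 n$. The squared bias is therefore bounded below by the latter quantity, which gives the claimed rate. The main obstacle is the sharp one-sided upper tail control $\Pi(k>K_n\mid X^n)\to 0$ in expectation, and ensuring that the weighted high-frequency terms in $\hat{\boldsymbol{f}}_n(0)$ cannot systematically cancel the truncation bias of $\boldsymbol{\theta}_0^*$; both rest on the thresholding built into the sieve prior, combined with the adaptive $l^2$ contraction of Proposition~\ref{prop:optimal}.
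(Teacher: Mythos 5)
Your core strategy—pinning down the adversarial sequence $\theta_{0j}^*\asymp j^{-\beta-1/2}/\log j$, cutting the sum at $K_n\asymp n^{1/(2\beta+1)}$, and arguing the truncation tail $\sum_{j>K_n}\theta_{0j}^*$ is the dominant contribution—is the same as the paper's. But you lower-bound the risk by the squared bias $\bigl(\E_0^{(n)}\hat{\boldsymbol f}_n(0)-\boldsymbol f_0^*(0)\bigr)^2$, whereas the paper bounds it by the expected square $\E_0^{(n)}\bigl[\bigl(\sum_i(1-u_i)\text{shrink}_i\theta_{0i}\bigr)^2\bigr]$ and isolates this as the leading term via Lemma~\ref{lem:lowerbound}; both routes can in principle give the same rate, but yours shifts the burden onto a sharp control of a first moment rather than a second moment, and that is where the gaps appear.

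First gap: your piece (i) is not ``a mean-zero noise plus a vanishing shrinkage bias.'' Since $\hat\theta_{nj}=u_j(X^n)\,\tfrac{\tau_j^2}{\tau_j^2+1/n}\,X_j^n$ and $u_j$ depends on the entire data vector, its expectation contains the term $\tfrac{1}{\sqrt n}\sum_{j\le K_n}\text{shrink}_j\,\mathrm{Cov}(u_j,\xi_j)$. This term is neither deterministic nor zero-mean; naively bounding each covariance by $1$ and summing gives $K_n/\sqrt n\asymp n^{-(2\beta-1)/(2(2\beta+1))}$, the same order as the tail you want to dominate. To beat it by a power of $n$ you need exactly the martingale/Doob bound the paper uses in its $R_3$ analysis (giving $|\E[\cdot]|\lesssim\sqrt{K_n/n}=n^{-\beta/(2\beta+1)}$), but you never invoke it. Second gap: for your piece (iii), and implicitly to control the systematic part of (i) near $j\approx K_n$, you need $\E_0^{(n)}[u_{K_n}(X^n)]$ to decay at a polynomial (in fact the paper proves exponential) rate. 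You deduce only that $\Pi(k>K_n\mid X^n)\to0$ in $P_{\boldsymbol\theta_0^*}^{(n)}$-probability from Proposition~\ref{prop:optimal}; that is weaker than an $L^1$ rate, and the heuristic ``otherwise the variance $k/n$ contradicts $l^2$ contraction'' does not upgrade it. The paper instead establishes the exponential inequality directly (Equation~(\ref{eq:E_K_n})) by re-running the machinery of Theorem~\ref{thm:apo} on the numerator $N_n$ corresponding to $\{k>K_n\}$. Without that, your Cauchy--Schwarz step on (iii) produces a bound involving $\sqrt{\E u_j}\cdot\sum_{j>K_n}\text{shrink}_j/\sqrt n\asymp\sqrt{\E u_{K_n}}\,n^{1/(2q)-1/2}$, which for $q$ close to $1/2$ needs $\E u_{K_n}$ to vanish at a fast polynomial rate, something your argument does not supply.
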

\begin{proof}
See the Appendix.
\end{proof}
\citet{Cai:2007p90} show that a global optimal estimator cannot be
pointwise optimal. The sieve prior leads to an (almost up to a $\log
n$ term) optimal global risk and Proposition
\ref{pro:Under-pointwise-loss,} shows that the pointwise risk
associated to the posterior mean $\hat{\boldsymbol{\theta}}_n$ is
suboptimal with  a power of $n$ penalty, whose exponent is
$$\frac{2\beta -1}{2\beta } - \frac{2\beta -1}{2\beta
+1}=\frac{2\beta -1}{2\beta (2\beta+1)}.$$ The maximal penalty is for
$\beta=(1+\sqrt{2})/2$, and it vanishes as $\beta$ tends to
1/2 and $+\infty$ (see the Figure \ref{fig:exp}).
\citet{Abramovich:2007p6014} also derive such a power $n$ penalty on
the maximum local risk of a globally optimal Bayesian estimate, as
well as on the reverse case (maximum global risk of a locally
optimal Bayesian estimate).
\begin{rem}
    This result is not anecdotal and illustrates the fact that the
    Bayesian approach is well suited for loss functions that are
    related to the Kullback-Leibler divergence (\textit{i.e.} often
    the $l^2$ loss). The pointwise loss does not satisfy this since
    it corresponds to an unsmooth linear functional of
    $\boldsymbol{\theta}$.  
    This possible suboptimality of the posterior distribution of
    some unsmooth functional of the parameter has already been
    noticed in various other cases, see for instance
    \cite{Rivoirard:2011} or \cite{kruijer_spectral}. The question of the
    existence of a fully Bayesian adaptive procedure to estimate $\boldsymbol{f}_0(t)=\sum_{j=1}^\infty a_j \theta_{0j}$ remains an
    open question.
\end{rem}

\begin{figure}
\begin{center}
\includegraphics[width=8cm,height=6cm]{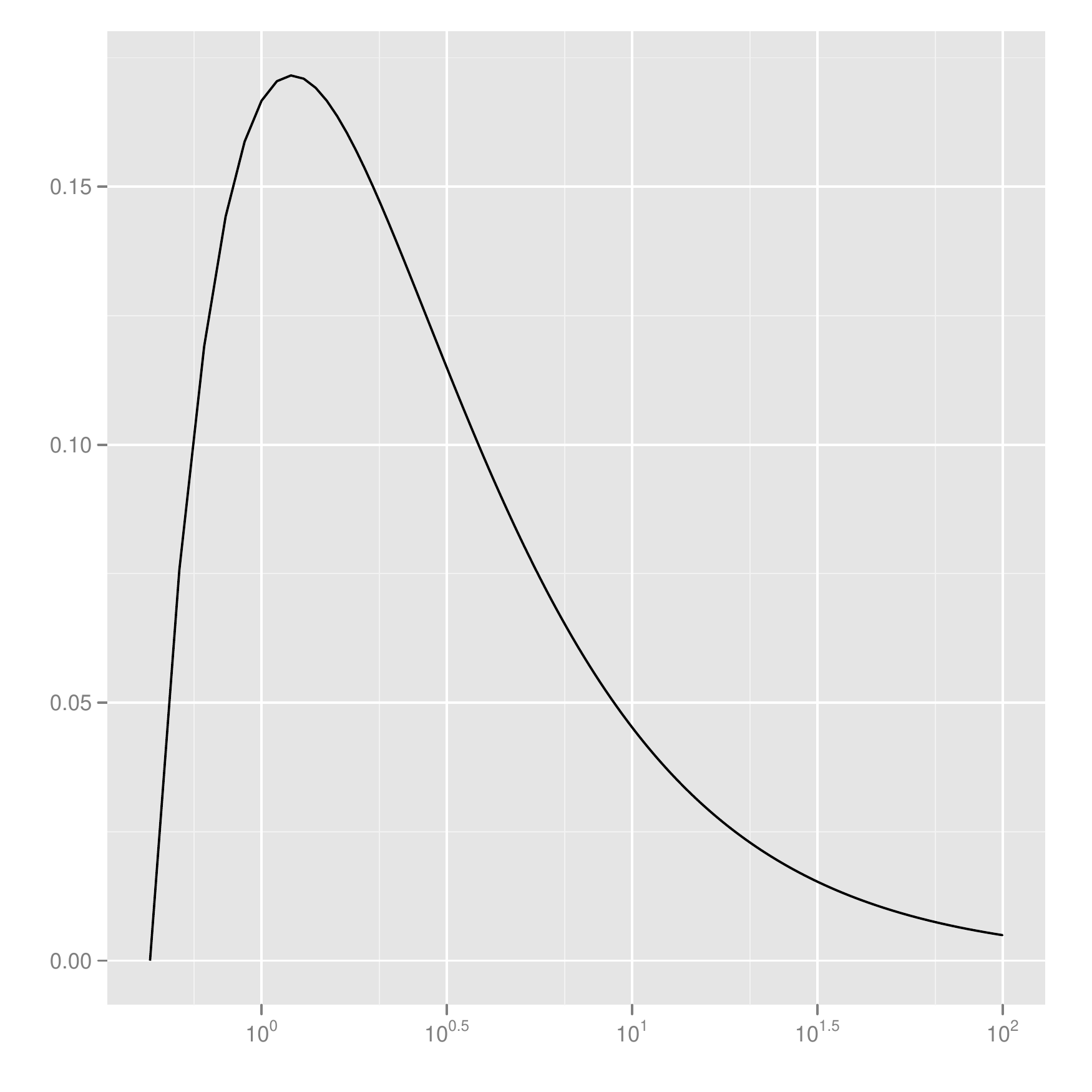}\\
  \caption{Variation of the exponent of the penalty in a log scale for $\beta$ between 1/2 and 100; it is maximum for $\beta=(1+\sqrt{2})/2$}\label{fig:exp}
\end{center}
\end{figure}

\section*{Acknowledgements}

We would like to thank the referees for their valuable comments which have helped to improve the manuscript.

\bibliographystyle{apalike}
\bibliography{biblio_short}

Corresponding author: Julyan Arbel, E37, Laboratoire de Statistique, CREST, 92245 Malakoff, France. \\
E-mail: \textit{julyan.arbel@m4x.org}

\appendix

\section{Appendix}

\subsection{Three
technical lemmas}

Set $\Sc_{n}(M)=\{ \boldsymbol{\theta}:\,
d_{n}^{2}(\boldsymbol{\theta},\boldsymbol{\theta}_{0})\geq
M\frac{\log n}{L(n)}\epsilon_{n}^{2}\} $ and recall that
$\Theta_{k_n}(Q) = \{ \boldsymbol{\theta} \in \Theta_{k_n} :\,\Vert
\boldsymbol{\theta}\Vert_{2,k_n} \leq n^Q\}$, $Q>0$. We begin with three
technical lemmas.

\begin{lem}
\label{lem:tests}If Conditions $\boldsymbol{A_3}$ and
$\boldsymbol{A_4}$ hold, then there exists a test
$\boldsymbol{\phi}_{n}$ 
such that for $M$ large enough, there exists a constant $c_2$ such that
\[
\E_{0}^{(n)}(\boldsymbol{\phi}_{n})\leq e^{-c_{2}M\frac{\log
n}{L(n)}n\epsilon_{n}^{2}}\quad
and\quad\E_{\boldsymbol{\theta}}^{(n)}\left(1-\boldsymbol{\phi}_{n}\right)\leq
e^{-c_{2}M\frac{\log n}{L(n)}n\epsilon_{n}^{2}},\] for all
$\boldsymbol{\theta}\in\Sc_{n}(M)\cap\Theta_{k_n}(Q)$.\end{lem}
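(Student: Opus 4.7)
The plan is to construct $\boldsymbol{\phi}_n$ as the supremum of local tests furnished by Condition $\boldsymbol{A_4}$, indexed over a $d_n$-covering of $\Sc_n(M)\cap\Theta_{k_n}(Q)$ organised into shells around $\boldsymbol{\theta}_0$. Set $R_n^2=M(\log n/L(n))\epsilon_n^2$ and decompose
\begin{equation*}
\Sc_n(M)\cap\Theta_{k_n}(Q)=\bigcup_{j\geq 1}S_{n,j},\quad S_{n,j}=\bigl\{\boldsymbol{\theta}\in\Theta_{k_n}(Q):jR_n\leq d_n(\boldsymbol{\theta},\boldsymbol{\theta}_0)<(j+1)R_n\bigr\},
\end{equation*}
noting that $\boldsymbol{A_3}$ caps the $d_n$-diameter of $\Theta_{k_n}(Q)$ polynomially in $n$, so effectively $j\leq n^{O(1)}$. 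I would then cover each shell by $d_n$-balls of radius $\zeta jR_n/2$, exploiting $\boldsymbol{A_3}$ in the form ``any $l^2$-ball of radius $r_j=\bigl(\zeta jR_n/(2D_0k_n^{D_1})\bigr)^{1/D_2}$ has $d_n$-diameter at most $\zeta jR_n$''. Since $\Theta_{k_n}(Q)$ is an $l^2$-ball of radius $n^Q$ sitting in dimension at most $k_n$, the standard volumetric estimate yields a cover of cardinality $N_j\leq(3n^Q/r_j)^{k_n}$, so $\log N_j\lesssim k_n\log n$.

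With a centre $\boldsymbol{\theta}_i^{(j)}\in S_{n,j}$ for each ball, invoke $\boldsymbol{A_4}$ to obtain local tests $\phi_n(\boldsymbol{\theta}_i^{(j)})$ and define $\boldsymbol{\phi}_n=\sup_{j,i}\phi_n(\boldsymbol{\theta}_i^{(j)})$. Since $d_n(\boldsymbol{\theta}_0,\boldsymbol{\theta}_i^{(j)})\geq jR_n$, a union bound produces
\begin{equation*}
\E_0^{(n)}(\boldsymbol{\phi}_n)\leq\sum_{j\geq 1}N_j\,e^{-c_1 nj^2R_n^2}\leq\sum_{j\geq 1}\exp\bigl\{C\,k_n\log n-c_1 j^2 nR_n^2\bigr\}.
\end{equation*}
For the alternative side, given any $\boldsymbol{\theta}\in S_{n,j}$, the triangle inequality applied to its nearest centre $\boldsymbol{\theta}_i^{(j)}$ gives $d_n(\boldsymbol{\theta},\boldsymbol{\theta}_i^{(j)})\leq\zeta jR_n/2<\zeta d_n(\boldsymbol{\theta}_0,\boldsymbol{\theta}_i^{(j)})$ (since then $d_n(\boldsymbol{\theta}_0,\boldsymbol{\theta}_i^{(j)})\geq jR_n(1-\zeta/2)\geq jR_n/2$), so the second half of $\boldsymbol{A_4}$ yields
\begin{equation*}
\E_{\boldsymbol{\theta}}^{(n)}(1-\boldsymbol{\phi}_n)\leq\E_{\boldsymbol{\theta}}^{(n)}\bigl(1-\phi_n(\boldsymbol{\theta}_i^{(j)})\bigr)\leq e^{-c_1 nj^2R_n^2}\leq e^{-c_1 nR_n^2}.
\end{equation*}

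The main obstacle will be matching the combinatorial entropy $k_n\log n$ of the cover against the exponential decay rate $c_1 nR_n^2$ of each local test. By the definitions in (\ref{kn}), $k_n\asymp n\epsilon_n^2\log n/L(n)$ and $nR_n^2=Mn\epsilon_n^2\log n/L(n)$, so the two quantities are of exactly the same order, with the ratio tunable through $M$. Choosing $M$ sufficiently large (depending on $j_0,M_0,Q,D_1,D_2$ and $c_1$) ensures that $c_1 j^2 nR_n^2-Ck_n\log n\geq\tfrac12 c_1 j^2 nR_n^2$ for every $j\geq 1$; the sum over $j$ then converges as a Gaussian tail, delivering both error bounds in the announced form with, for instance, $c_2=c_1/2$.
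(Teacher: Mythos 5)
Your proof is correct, but it is organised differently from the paper's. The paper does not peel $\Sc_n(M)\cap\Theta_{k_n}(Q)$ into $d_n$-shells: it simply takes one $l^2$-cover of the whole set by balls of a single radius $r_n=\bigl(\sqrt{M\tfrac{\log n}{L(n)}}\,\zeta\epsilon_n/(D_0k_n^{D_1})\bigr)^{1/D_2}$, observes that every centre $\boldsymbol{\theta}^{(i)}$ lies in $\Sc_n(M)$ so that $d_n(\boldsymbol{\theta}_0,\boldsymbol{\theta}^{(i)})\geq R_n$, applies $\boldsymbol{A_4}$ at each centre, and closes with the same cardinality-versus-exponent comparison you describe. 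The shell decomposition you introduce is a refinement: it yields a Gaussian tail $\sum_j e^{-c j^2 nR_n^2}$ and would give the stronger exhibit $e^{-c_1 n(jR_n)^2}$ against alternatives at distance $\gtrsim jR_n$, which is the bookkeeping the paper later uses (in the proof of Proposition~\ref{cor:point-1} and Equation~(\ref{eq:j})) but does not need for Lemma~\ref{lem:tests} itself; for the statement as written only the $j=1$ shell contributes to the rate, so the paper's single cover is the shorter route. Two small remarks on your write-up. First, the parenthetical ``$d_n(\boldsymbol{\theta}_0,\boldsymbol{\theta}_i^{(j)})\geq jR_n(1-\zeta/2)$'' invokes a triangle inequality for $d_n$, which the paper only assumes to be a semimetric; this step is superfluous anyway, since you already placed $\boldsymbol{\theta}_i^{(j)}\in S_{n,j}$, giving $d_n(\boldsymbol{\theta}_0,\boldsymbol{\theta}_i^{(j)})\geq jR_n$ outright, so just drop the parenthetical. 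Second, the claim ``effectively $j\leq n^{O(1)}$'' would require bounding $d_n(\boldsymbol{\theta},\boldsymbol{\theta}_0)$ via $\boldsymbol{A_3}$, but $\boldsymbol{\theta}_0$ need not lie in $\Theta_{k_n}(Q)$; fortunately the Gaussian tail in $j$ converges without any truncation, so this observation is not load-bearing and can be omitted.
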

\begin{proof}
Set $r_{n}=\left(\sqrt{M\frac{\log
n}{L(n)}}\frac{\zeta\epsilon_{n}}{D_0k_{n}^{D_{1}}}\right)^{1/D_{2}}$.
The set $\Sc_{n}(M)\cap\Theta_{k_n}(Q)$ is compact relative to the
$l^2$ norm. Let a covering of this set by $l^2$ balls of radius
$r_{n}$ and centre $\boldsymbol{\theta}^{(i)}$. Its number of
elements is
$\eta_n\lesssim(Cn^Q/r_{n})^{k_{n}}\lesssim\exp(Ck_{n}\log
n)\lesssim\exp(C\frac{\log n}{L(n)}n\epsilon_{n}^{2})$ due to
relation (\ref{kn}).

For each centre $\boldsymbol{\theta}^{(i)}\in\Sc_{n}(M)\cap\Theta_{k_n}(Q)$, there exists a test
$\phi_{n}(\boldsymbol{\theta}^{(i)})$ satisfying Condition
$\boldsymbol{A_4}$. We define the test
$\boldsymbol{\phi}_{n}=\max_{i}\phi_{n}(\boldsymbol{\theta}^{(i)})$
which satisfies
\[
\E_{0}^{(n)}(\boldsymbol{\phi}_{n})\leq
\eta_ne^{-c_{1}M\frac{\log
n}{L(n)}n\epsilon_{n}^{2}}\leq e^{C\frac{\log
n}{L(n)}n\epsilon_{n}^{2}-c_{1}M\frac{\log
n}{L(n)}n\epsilon_{n}^{2}}\leq
e^{-c_{2}M\frac{\log
n}{L(n)}n\epsilon_{n}^{2}},\] for $M$ large enough and a constant $c_2$.

Here, Condition $\boldsymbol{A_3}$ allows to switch from the
coverage in term of the $l^2$ distance to a covering expressed in
term of $d_{n}$: each
$\boldsymbol{\theta}\in\Sc_{n}(M)\cap\Theta_{k_n}(Q)$ which lies in
a $l^2$ ball of centre $\boldsymbol{\theta}^{(i)}$ and of radius
$r_{n}$ in the covering of size $\eta_n$ also lies in a $d_{n}$ ball
of adequate radius
\[
d_{n}(\boldsymbol{\theta},\boldsymbol{\theta}^{(i)})\leq
D_{0}k_{n}^{D_{1}}\Vert
\boldsymbol{\theta}-\boldsymbol{\theta}^{(i)}\Vert_2 ^{D_{2}}\leq
D_{0}k_{n}^{D_{1}}r_{n}^{D_{2}}=\zeta\epsilon_{n}\sqrt{M\frac{\log
n}{L(n)}}.\]
 Then there exists a constant $c_2$ (the minimum with the previous one)
\[
\sup_{\boldsymbol{\theta}\in\Sc_{n}(M)\cap\Theta_{k_n}(Q)}\E_{\boldsymbol{\theta}}^{(n)}\left(1-\boldsymbol{\phi}_{n}\right)\leq
e^{-c_{2}M\frac{\log
n}{L(n)}n\epsilon^{2}},\] hence the result follows.\end{proof}

\begin{lem}
\label{lem:prior_mass}Under Condition $\boldsymbol{A_5}$, for any constant $c_6>0$, there
exist positive constants $Q$,  $C$ and $M_0$ such that
\begin{equation}
\Pi(\Theta_{k_n}^{c}(Q))\leq
Ce^{-c_{6}n\epsilon_{n}^{2}},\label{eq:reslem2}\end{equation} where $M_0$ is introduced in the definition (\ref{kn}) of $k_n$, and 
$\Theta_{k_n}^{c}(Q)$, the complementary of $\Theta_{k_n}(Q)$, is
taken in $\Theta$.\end{lem}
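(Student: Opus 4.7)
The plan is to decompose the event $\Theta_{k_n}^c(Q)$ according to whether the threshold parameter $k$ exceeds $k_n$ or not, and bound each piece separately. Concretely, since the sieve prior assigns mass only to $\bigcup_k \Theta_k$, I would write
\begin{equation*}
\Pi(\Theta_{k_n}^c(Q)) \;\leq\; \sum_{k > k_n} \pi(k) \;+\; \sum_{k \leq k_n} \pi(k)\,\Pi_k\!\left(\|\boldsymbol{\theta}\|_{2,k} > n^Q\right),
\end{equation*}
and argue that for a large enough $M_0$ in the definition of $k_n$ and a large enough $Q$, both sums are controlled by $e^{-c_6 n\epsilon_n^2}$.

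For the tail sum in $k$, I would use the upper bound $\pi(k) \leq e^{-bkL(k)}$ from Equation (\ref{eq:prior k}). Since $k \mapsto kL(k)$ is eventually increasing and $L$ is slow varying, $\sum_{k>k_n} e^{-bkL(k)}$ is bounded by a constant times $e^{-b k_n L(k_n)}$. Recall $k_n = \lfloor M_0 j_n \log(n)/L(n)\rfloor$ and $j_n = \lfloor j_0 n\epsilon_n^2/\log n\rfloor$, so $k_n \asymp M_0 j_0\, n\epsilon_n^2/L(n)$. Because $L$ is slow varying and $k_n$ grows at most polynomially in $n$, $L(k_n)/L(n) \to 1$, hence $k_n L(k_n) \geq c\, M_0 j_0\, n\epsilon_n^2$ for $n$ large. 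Choosing $M_0$ large enough that $b c M_0 j_0 \geq 2 c_6$ gives $\sum_{k>k_n} \pi(k) \leq e^{-c_6 n\epsilon_n^2}$. This is the most delicate step because it is where the interplay between $M_0$, $j_0$, and the slow variation of $L$ enters.

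For the second sum, I would use a union bound to turn the norm constraint into a per-coordinate one: if $\|\boldsymbol{\theta}\|_{2,k} > n^Q$ then $|\theta_j| > n^Q/\sqrt{k}$ for at least one $j \leq k$. Each marginal under $\Pi_k$ is $\frac{1}{\tau_j} g(\cdot/\tau_j)$, and using the upper tail bound $g(u) \leq G_3 e^{-G_4 |u|^\alpha}$ from Equation (\ref{eq:priorg}) together with $\tau_j \leq \tau_0$ from Equation (\ref{eq:maxtau}), a standard tail integral gives
\begin{equation*}
\Pi_k(|\theta_j| > n^Q/\sqrt{k}) \;\leq\; C \exp\!\left(-G_4\bigl(n^Q/(\sqrt{k}\tau_0)\bigr)^{\alpha}\right).
\end{equation*}
Summing over $j \leq k \leq k_n$ and using that $k_n$ is at most polynomial in $n$, the per-$k$ probability is bounded by $k_n \cdot C \exp(-c'\, n^{Q'\alpha})$ for some $Q' > 0$ depending on $Q$. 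For $Q$ large enough, this double-exponentially small quantity dominates $n\epsilon_n^2$, and $\sum_{k\leq k_n}\pi(k)\cdot (\cdot)$ remains bounded by $e^{-c_6 n\epsilon_n^2}$.

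Combining both bounds yields Equation (\ref{eq:reslem2}) for some constant $C$, once $M_0$ is taken sufficiently large depending on $c_6, b, j_0$, and $Q$ sufficiently large depending on $\alpha, G_4, \tau_0$. The only real subtlety, as noted, is in the first piece, where one must check that the slow variation of $L$ does not spoil the identity $k_n L(k_n)/(n\epsilon_n^2) \gtrsim M_0$; the rest is a fairly mechanical tail estimate.
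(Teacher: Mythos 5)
Your decomposition of $\Theta_{k_n}^c(Q)$ into the event $\{k>k_n\}$ and the event $\{k\leq k_n,\ \|\boldsymbol{\theta}\|_{2,k}>n^Q\}$ is exactly the paper's, and your treatment of the first piece (bound $\pi(k>k_n)$ by $e^{-bk_nL(k_n)}$, note $k_nL(k_n)\gtrsim M_0 j_n\log n\gtrsim M_0\, n\epsilon_n^2$, and make $M_0$ large) is the paper's argument almost verbatim, including the slightly informal handling of the slow-varying ratio $L(k_n)/L(n)$. Where you diverge is the second piece: you replace the norm constraint by a per-coordinate one via a union bound ($\|\boldsymbol{\theta}\|_{2,k}>n^Q$ forces $|\theta_j|>n^Q/\sqrt{k}$ for some $j$) and then apply the one-dimensional tail estimate from the upper bound on $g$. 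The paper instead writes out the $k_n$-dimensional integral, factors out $(G_3 n^{H_2})^{k_n}$ using the lower bound (\ref{eq:mintau}) on $\tau_j$, and then bounds $\sum_j|\theta_j|^\alpha/\tau_j^\alpha$ from below on the set $\|\boldsymbol{\theta}\|_{2,k_n}>n^Q$ by comparing the $l^2$ and $l^\alpha$ norms, splitting into the cases $\alpha\geq 2$ (H\"older) and $\alpha<2$. Your union-bound route avoids that $\alpha$-case split and is a bit more elementary, at the cost of a slightly looser exponent (a factor $k_n^{\alpha/2}$ rather than $k_n^{\alpha/2-1}$ in the denominator of the exponential), which is irrelevant here since both are dominated once $Q$ is large. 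One thing worth flagging: the paper also needs to explicitly bound the sum $\sum_{k\leq k_n}\pi(k)\Pi_k(\cdot)$ by its largest term $\Pi_{k_n}(\cdot)$ using monotonicity in $k$ together with $\sum_k\pi(k)\leq 1$; you implicitly do the equivalent by bounding uniformly in $k\leq k_n$ and paying the harmless polynomial factor $k_n$. Both versions give the claimed $Ce^{-c_6 n\epsilon_n^2}$.
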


\begin{proof}
$\Theta_{k_n}^{c}(Q)$ is written by $\Theta_{k_n}^c(Q)=\{
\boldsymbol{\theta}\in\Theta:\,\Vert
\boldsymbol{\theta}\Vert_{2,k_n} >  n^Q  \text{ or } \exists j>k_n
\text{ s.t. } \theta_j\neq 0\}$, so its prior mass is less than
$\pi(k> k_{n}) + \sum_{k\leq
k_n}\pi_k\Pi_{k}(\boldsymbol{\theta}\in\Theta_{k}:\,\Vert
\boldsymbol{\theta}\Vert_{2,k}
> n^Q )$, where the last sum is less than $\Pi_{k_n}(\boldsymbol{\theta}\in\Theta_{k_n}:\,\Vert
\boldsymbol{\theta}\Vert_{2,k_n}
> n^Q )$ because its terms are increasing.

The prior mass of sieves that exceed $k_n$ is controlled by Equation
(\ref{eq:prior k}). We have
\begin{equation*} \pi\left(k\geq k_{n}\right)\leq \sum_{j\geq k_n} e^{-bjL(j)} \leq \sum_{j\geq k_n} e^{-bjL(k_n)} \leq Ce^{-bk_nL(k_n)}.\end{equation*}

Since $L$ is a slow varying function, we have $k_nL(k_n)\gtrsim
j_n\log(n)\gtrsim n\epsilon_n^2$. Hence $\pi\left(k\geq
k_{n}\right)\leq Ce^{-c_{6}n\epsilon_{n}^{2}}$ for a constant $c_6$
as large as needed since it is determined by constant $M_0$ in Equation (\ref{kn}).

Then by the second part of Condition (\ref{eq:priorg}), $
\Pi_{k_{n}}\left(\boldsymbol{\theta}\in\Theta_{k_n}:\,\Vert
\boldsymbol{\theta}\right\Vert_{2,k_n}
> n^Q )$ is less than
\begin{eqnarray}
 &  & \int_{\Vert \boldsymbol{\theta}\Vert_{2,k_n} > n^Q }\prod_{j=1}^{k_{n}}\, g(\theta_{j}/\tau_{j})/\tau_{j}d \theta_{j},\nonumber\\
&\leq & (G_{3}n^{H_2})^{k_{n}}\int_{\Vert
\boldsymbol{\theta}\Vert_{2,k_n} > n^Q
}\exp(-G_{4}\sum_{j=1}^{k_{n}}|\theta_{j}|^{\alpha
}/\tau_{j}^{\alpha })\, d \theta_{i},\label{eq:upperbound}\end{eqnarray} by
using the lower bound on the $\tau_j$'s of Equation
(\ref{eq:mintau}).

 If $\alpha \geq 2$, then
applying H\"{o}lder inequality, one obtains
$$n^{2Q}\leq \Vert \boldsymbol{\theta}\Vert_{2,k_n}^2\leq \Vert \boldsymbol{\theta}\Vert_{\alpha,k_n}^2 k_n^{1-2/\alpha },$$
which leads to
$$\Vert \boldsymbol{\theta}\Vert_{\alpha,k_n}^\alpha\geq
k_{n}^{1-\alpha /2}n^{Q\alpha }.$$

If $\alpha < 2$, then a classical result states that the $l^\alpha$
norm $\Vert\,.\,\Vert_\alpha$  is larger than the $l^2$ norm
$\Vert\,.\,\Vert_2$, \textit{i.e.}
$$\Vert \boldsymbol{\theta}\Vert_{\alpha,k_n}^\alpha\geq \Vert \boldsymbol{\theta}\Vert_{2,k_n}^\alpha \geq
n^{Q\alpha}.$$

Eventually the upper bound $\tau_0$ on the $\tau_j$'s of Equation
(\ref{eq:maxtau}) provides
$$ \sum_{j=1}^{k_{n}}\left|\theta_{j}\right|^{\alpha}/\tau_j^{\alpha}\geq \tau_0^{-\alpha}n^{Q\alpha}\min(k_{n}^{1-\alpha/2},1).$$

The integral in the right-hand side of (\ref{eq:upperbound}) is bounded by

$$\exp(-\frac{G_4}{2}\Vert \boldsymbol{\theta}\Vert_{2,k_n}^\alpha / \tau_0^{\alpha})\int_{\Theta_{k_n}} \exp(-\frac{G_4}{2}\sum_{j=1}^{k_{n}}|\theta_{j}|^{\alpha
}/\tau_{j}^{\alpha })\, d \theta_{i}.$$

The last integral is bounded by $C^{k_n}$, so
\begin{equation*}
\Pi_{k_{n}}\left(\boldsymbol{\theta}\in\Theta_{k_n}:\,\left\Vert \boldsymbol{\theta}\right\Vert_{2,k_n}
> n^Q \right)\leq C^{k_{n}\log n}\exp(-\frac{G_4}{2}\tau_{0}^{-\alpha}n^{Q\alpha}\min(k_{n}^{1-\alpha/2},1)).
\end{equation*}
The right-hand side of the last inequality can be made smaller than $Ce^{-c_{6}n\epsilon_{n}^{2}}$ for any constant $C$ and $c_6$ provided that 
$Q$ is chosen large enough. This entails result (\ref{eq:reslem2}).

{In the truncated case (\ref{eq:gntrunc}), we note that if $\sum_{j=1}^{k_n} \vert \theta_j \vert \leq r_1$, then $\sum_{j=1}^{k_n}  \theta_j^2 \leq r_1^2$, so that for $n$ large enough, $\Pi(\Theta_{k_n}^{c}(Q))=\pi(k\geq k_n)$, and the rest of the proof is similar.}
\end{proof}

\begin{lem}
\label{lem:minoration_bn}Under Conditions $\boldsymbol{A_1}$,
$\boldsymbol{A_2}$ and $\boldsymbol{A_5}$, there exists $c_{4}>0$
such that \[ \Pi(\mathcal{B}_{n}(m))\geq
e^{-c_{4}n\epsilon_{n}^{2}}.\]
\end{lem}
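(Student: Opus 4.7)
The plan is to exhibit a tractable subset of $\mathcal{B}_{n}(m)$ whose prior mass can be bounded below by an exponential in $n\epsilon_n^2$. The natural candidate is the $l^2$ ball $\mathcal{A}_{n}(H_{1})$ on the sieve $\Theta_{j_{n}}$, since Condition $\boldsymbol{A_2}$ already controls Kullback--Leibler type quantities on this set.

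Step 1 (inclusion $\mathcal{A}_n(H_1)\subset \mathcal{B}_n(m)$). Starting from the identity
\[
K(p_{0}^{(n)},p_{\boldsymbol{\theta}}^{(n)}) = K(p_{0}^{(n)},p_{0j_{n}}^{(n)}) + \int p_{0}^{(n)} \log\bigl(p_{0j_{n}}^{(n)}/p_{\boldsymbol{\theta}}^{(n)}\bigr)\, d\mu,
\]
I bound the first term by $n\epsilon_n^2$ using Condition $\boldsymbol{A_1}$, and the absolute value of the second term by $\widetilde{K}(p_{0j_n}^{(n)},p_{\boldsymbol{\theta}}^{(n)})\leq n\epsilon_n^2$ using Condition $\boldsymbol{A_2}$ (which ensures $\mathcal{A}_n(H_1)\subset \widetilde{\mathcal{B}}_n(m)$). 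This yields $K(p_{0}^{(n)},p_{\boldsymbol{\theta}}^{(n)})\leq 2n\epsilon_n^2$. For the centered moment, I would write
\[
\log(p_0/p_{\boldsymbol{\theta}}) - K(p_0,p_{\boldsymbol{\theta}}) = \bigl[\log(p_0/p_{0j_n}) - K(p_0,p_{0j_n})\bigr] + \bigl[\log(p_{0j_n}/p_{\boldsymbol{\theta}}) - \bar{K}\bigr],
\]
with $\bar{K}=K(p_0,p_{\boldsymbol{\theta}})-K(p_0,p_{0j_n})$, then apply $|a+b|^m\leq 2^{m-1}(|a|^m+|b|^m)$ and integrate against $p_0^{(n)}$. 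The first term contributes $V_{m,0}(p_0^{(n)},p_{0j_n}^{(n)})\leq (n\epsilon_n^2)^{m/2}$ by $\boldsymbol{A_1}$. The second is close to $\widetilde{V}_{m,0}(p_{0j_n}^{(n)},p_{\boldsymbol{\theta}}^{(n)})$ up to a constant shift $\bar K - K(p_{0j_n},p_{\boldsymbol{\theta}})$, bounded by $\widetilde K$ and hence by $n\epsilon_n^2$, so the $m$-th moment shift is absorbed into the overall $2^{m+1}(n\epsilon_n^2)^{m/2}$ bound. Thus $\mathcal{A}_n(H_1)\subset \mathcal{B}_n(m)$.

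Step 2 (prior mass of $\mathcal{A}_n(H_1)$). By the hierarchical form (\ref{eq:sievep}), $\Pi(\mathcal{A}_n(H_1))\geq \pi(j_n)\,\Pi_{j_n}(\mathcal{A}_n(H_1))$. The lower bound in (\ref{eq:prior k}) combined with $L\leq \log$ and $j_n\log n\asymp n\epsilon_n^2$ gives $\pi(j_n)\geq e^{-aj_nL(j_n)}\geq e^{-Cn\epsilon_n^2}$. For the conditional prior, the density lower bound (\ref{eq:priorg}) yields
\[
\Pi_{j_n}(\mathcal{A}_n(H_1)) \geq G_1^{j_n}\prod_{j=1}^{j_n}\tau_j^{-1}\int_{\|\boldsymbol{\theta}-\boldsymbol{\theta}_{0j_n}\|_{2,j_n}\leq n^{-H_1}} \exp\Bigl(-G_2\sum_{j=1}^{j_n}|\theta_j/\tau_j|^\alpha\Bigr)\, d\boldsymbol{\theta}.
\]
On this ball, convexity gives $|\theta_j|^\alpha\lesssim |\theta_{0j}|^\alpha+|\theta_j-\theta_{0j}|^\alpha$; the first piece sums to at most $Cj_n\log n$ by (\ref{eq:somme theta tau}), and the second, using (\ref{eq:mintau}), is bounded by $j_n n^{\alpha(H_2-H_1)}=o(1)$ as soon as $H_1>H_2$, which is permitted since the remark after $\boldsymbol{A_2}$ allows $H_1$ to be arbitrarily large. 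The volume of an $l^2$ ball of radius $n^{-H_1}$ in $\mathbb{R}^{j_n}$, via Stirling, contributes a factor $e^{-C' j_n\log n}$, and (\ref{eq:maxtau}) gives $\prod\tau_j^{-1}\geq \tau_0^{-j_n}$. Collecting everything produces $\Pi_{j_n}(\mathcal{A}_n(H_1))\geq e^{-C'' n\epsilon_n^2}$, and multiplying by the lower bound on $\pi(j_n)$ finishes the proof.

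The main obstacle lies in Step 1, specifically in transferring the $\widetilde V_{m,0}$ bound (which is centered at $K(p_{0j_n},p_{\boldsymbol{\theta}})$) into a $V_{m,0}$ bound centered at $K(p_0,p_{\boldsymbol{\theta}})$: one must verify that the discrepancy between these two centerings is small enough (of order at most $n\epsilon_n^2$) not to spoil the $(n\epsilon_n^2)^{m/2}$ scale. The rest of Step 2 is essentially a volumetric computation calibrated against $j_n\log n\asymp n\epsilon_n^2$, which is exactly why $j_n$ was defined this way.
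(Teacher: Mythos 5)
Your proposal follows the same two-step structure as the paper's proof: establish the inclusion $\mathcal{A}_n(H_1)\subset\mathcal{B}_n(m)$ by decomposing $K(p_0^{(n)},p_{\boldsymbol{\theta}}^{(n)})$ and $V_{m,0}(p_0^{(n)},p_{\boldsymbol{\theta}}^{(n)})$ through the intermediate density $p_{0j_n}^{(n)}$ and invoking $\boldsymbol{A_1}$, $\boldsymbol{A_2}$; then lower-bound $\Pi(\mathcal{A}_n(H_1))\geq\pi(j_n)\Pi_{j_n}(\mathcal{A}_n(H_1))$ using (\ref{eq:prior k}), (\ref{eq:priorg}), (\ref{eq:somme theta tau}), (\ref{eq:mintau}), (\ref{eq:maxtau}) and the volume of the $l^2$ ball, calibrated against $j_n\log n\asymp n\epsilon_n^2$. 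This is exactly the paper's argument. The one point you flag as an obstacle — the mismatch between the centering constant $K(p_{0j_n}^{(n)},p_{\boldsymbol{\theta}}^{(n)})$ appearing in $\widetilde V_{m,0}$ and the term $\int p_0^{(n)}\log(p_{0j_n}^{(n)}/p_{\boldsymbol{\theta}}^{(n)})d\mu$ that actually arises in the decomposition — is real, but the paper does not address it either; it simply writes the $\widetilde V_{m,0}$ bound. This is exact for $m=2$ (variance is minimized by centering at the mean), and for general $m$ it costs at most an extra factor $2^m$ via the $c_r$-inequality and Jensen, which would only require a slightly larger constant in the definition of $\mathcal{B}_n(m)$; so it is a cosmetic rather than substantive gap, and your sketch of how to absorb it is the right idea.
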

\begin{proof}
Let $\boldsymbol{\theta}\in\Ac_{n}(H_1)$. For $n$ large enough,
Conditions $\boldsymbol{A_1}$ and $\boldsymbol{A_2}$ imply that\[
K(p_{0}^{(n)},p_{\boldsymbol{\theta}}^{(n)})\leq
K(p_{0}^{(n)},p_{0j_{n}}^{(n)})+\widetilde{K}(p_{0j_{n}}^{(n)},p_{\boldsymbol{\theta}}^{(n)})\leq
2n\epsilon_{n}^{2},\] and\begin{eqnarray*}
V_{m,0}(p_{0}^{(n)},p_{\boldsymbol{\theta}}^{(n)}) & = & \int p_{0}^{(n)}\left|\log({p_{0}^{(n)}}/{p_{0j_{n}}^{(n)}})-K(p_{0}^{(n)},p_{0j_{n}}^{(n)})+\right.\\
 &  & \left.\log({p_{0j_{n}}^{(n)}}/{p_{\boldsymbol{\theta}}^{(n)}})-\int p_{0}^{(n)}\log({p_{0j_{n}}^{(n)}}/{p_{\boldsymbol{\theta}}^{(n)}})d\mu\right|^{m}d\mu\\
 & \leq & 2^{m}(V_{m,0}(p_{0}^{(n)},p_{0j_{n}}^{(n)})+\widetilde{V}_{m,0}(p_{0j_{n}}^{(n)},p_{\boldsymbol{\theta}}^{(n)}))\leq 2^{m+1}\left(n\epsilon_{n}^{2}\right)^\frac{m}{2},\end{eqnarray*}
which yields $\Ac_{n}(H_1)\subset\mathcal{B}_{n}(m)$ so that a lower
bound for $\Pi(\mathcal{B}_{n}(m))$ is given by $\Pi(\Ac_{n}(H_1))$.
Note that for $H_0>H_1$, then
\begin{equation}
\label{eq:AA}
\Ac_{n}(H_0)\subset\Ac_{n}(H_1)\subset\mathcal{B}_{n}(m).
\end{equation}
We have 
\begin{equation*}
\Pi(\Ac_{n}(H_1))=\sum_{k=1}^\infty\pi(k)\Pi_{k}(\Ac_{n}(H_1))\geq \pi(j_n)\Pi_{j_n}(\Ac_{n}(H_1)).
\end{equation*}
By the first part of Condition (\ref{eq:prior k}) we have 
\begin{equation}
\label{eq:kna}
\pi(j_n)\geq e^{-j_nL(j_n)}\geq e^{-\frac{c_4}{2}n\epsilon_n^2},
\end{equation}
 for $c_4$ large enough. 
Now by the first part of Condition (\ref{eq:priorg}) and by
Condition (\ref{eq:maxtau})
\begin{eqnarray}
\Pi_{j_n}(\Ac_{n}(H_1)) & = & \int_{\Vert \boldsymbol{\theta}-\boldsymbol{\theta}_{0j_{n}}\Vert_{2,j_n} \leq n^{-H_1}}\prod_{j=1}^{j_{n}}\,g({\theta_{j}}/{\tau_{j}})/\tau_{j}d\theta_{j}\label{eq:int_A_n}\\
 & \geq & \left(G_{1}/\tau_{0}\right)^{j_{n}}
 \int_{\left\Vert \boldsymbol{\theta}-\boldsymbol{\theta}_{0j_{n}}\right\Vert_{2,j_n} \leq n^{-H_1}}\exp(-G_{2}\sum_{j=1}^{j_{n}}|\theta_{j}|^{\alpha}/\tau_{j}^{\alpha})d\theta_{j}.\nonumber
\end{eqnarray}
We can bound above $\tau_{j}^{-\alpha }$ by $n^{\alpha H_2}$ by
Equation (\ref{eq:mintau}) as $j\leq j_n\leq k_n$. We write
$\left|\theta_{j}\right|^{\alpha }\leq2^{\alpha
}\left(\left|\theta_{0j}\right|^{\alpha
}+\left|\theta_{j}-\theta_{0j}\right|^{\alpha }\right)$. First,
Equation (\ref{eq:somme theta tau}) gives \[
\sum_{j=1}^{j_{n}}\left|\theta_{0j}\right|^{\alpha
}/\tau_{j}^{\alpha }\leq Cj_{n}\log n.\] Then, if $\alpha \geq2$ \[
\sum_{j=1}^{j_{n}}\left|\theta_{j}-\theta_{0j}\right|^{\alpha
}\leq\left\Vert
\boldsymbol{\theta}-\boldsymbol\theta_{0j_n}\right\Vert_{2,j_n}
^{\alpha}\leq n^{-\alpha H_1},\] and if $\alpha <2$ then
H\"{o}lder's inequality provides\[
\sum_{j=1}^{j_{n}}\left|\theta_{j}-\theta_{0j}\right|^{\alpha
}\leq\left\Vert
\boldsymbol{\theta}-\boldsymbol\theta_{0j_n}\right\Vert_{2,j_n}
^{\alpha }j_{n}^{1-\alpha /2}\leq n^{-\alpha H_1}j_{n}^{1-\alpha
/2}.\] In both cases we have\[
\sum_{j=1}^{j_{n}}\left|\theta_{j}\right|^{\alpha }/\tau_{j}^{\alpha
}\leq 2^\alpha (Cj_{n}\log n+n^{\alpha (H_2-H_1)}j_{n}^{1-\alpha
/2}),\] so choosing $H_2 \leq H_1$ ensures to bound the latter by
$j_n\log n$. Last, the integral of the ball in dimension $j_{n}$,
centered around $\boldsymbol{\theta}_{0j_{n}}$, and of radius
$n^{-H_1}$, is at least equal to $e^{-Cj_{n}\log n}$, for some given
positive constant $C$.

Noting that $j_{n}= \lfloor j_{0}n\epsilon_{n}^{2}/\log(n)\rfloor$
and choosing $H_1$ large enough, which is possible by Equation
(\ref{eq:AA}), ensures the existence of $c_{4}>0$ such that
$\Pi_{j_n}(\Ac_{n}(H_1))\geq e^{-\frac{c_{4}}{2} n\epsilon_{n}^{2}}$. Combining this with  (\ref{eq:kna}) allows to conclude.

{In the truncated case (\ref{eq:gntrunc}), we can first choose $r_1$ larger than $2\sum_{j=1}^{j_n} \vert \theta_{0j} \vert$. If $\boldsymbol{\theta}\in\Ac_{n}(H_1)$, then $\sum_{j=1}^{j_n} \vert \theta_{j} \vert\leq \sum_{j=1}^{j_n} (\vert \theta_{j}-\theta_{0j} \vert+\vert \theta_{0j} \vert)\leq \sqrt{j_n}n^{-H_1}+r_1/2\leq r_1$ for $n$ and $H_1$ large enough. 
So the expression of integral (\ref{eq:int_A_n}) is still valid.}
\end{proof}

\subsection{Theorem \ref{thm:apo}}

\begin{proof}
(of \textbf{Theorem \ref{thm:apo}})

Express the quantity of interest $\Pi\left(\Sc_{n}(M)\vert
X^{n}\right)$ in terms of $N_n$, $\widetilde{N_{n}}$ and $D_{n}$
defined as follows
\begin{equation*}
\frac{\int_{\Sc_{n}(M)\cap\Theta_{k_n}(Q)}p_{\boldsymbol{\theta}}^{(n)}/p_{\boldsymbol{\theta}_{0}}^{(n)}d\Pi(\boldsymbol{\theta})+\int_{\Sc_{n}(M)\cap\Theta_{k_n}^{c}(Q)}p_{\boldsymbol{\theta}}^{(n)}/p_{\boldsymbol{\theta}_{0}}^{(n)}d\Pi(\boldsymbol{\theta})}{\int_{\Theta}p_{\boldsymbol{\theta}}^{(n)}/p_{\boldsymbol{\theta}_{0}}^{(n)}d\Pi(\boldsymbol{\theta})}\\
  :=  \dfrac{N_{n}+\widetilde{N_{n}}}{D_{n}}.\end{equation*}
Denote
$\rho_{n}(c_{3})=\exp(-(c_{3}+1)n\epsilon_{n}^{2})\Pi(\mathcal{B}_{n}(m))$
for $c_{3}>0$. Introduce $\boldsymbol{\phi}_n$ the test statistic of
Lemma \ref{lem:tests}, and take the expectation of the posterior mass of
$\Sc_{n}(M)$ as follows
\begin{align}
 & \E_{0}^{(n)}\left(\frac{N_{n}+\widetilde{N_{n}}}{D_{n}}\left(\boldsymbol{\phi}_{n}+1-\boldsymbol{\phi}_{n}\right)\left(\ind(D_{n}\leq\rho_{n}(c_{3}))+\ind(D_{n}>\rho_{n}(c_{3}))\right)\right) \nonumber \\
 & \leq  \E_{0}^{(n)}\left(\boldsymbol{\phi}_{n}\right)+\E_{0}^{(n)}\left(\frac{N_{n}+\widetilde{N_{n}}}{D_{n}}\left(1-\boldsymbol{\phi}_{n}\right)\left(\ind(D_{n}\leq\rho_{n}(c_{3}))+
 \ind(D_{n}>\rho_{n}(c_{3}))\right)\right) \nonumber\\
 & \leq  \E_{0}^{(n)}\left(\boldsymbol{\phi}_{n}\right)+ \p_{0}^{(n)}\left(D_{n}\leq\rho_{n}(c_{3})\right)+\frac{\E_{0}^{(n)}\left(N_{n}\left(1-\boldsymbol{\phi}_{n}\right)\right)+
 \E_{0}^{(n)}(\widetilde{N_{n}})}{\rho_{n}(c_{3})}.\label{eq:post mass Sn}
 \end{align}
Lemma 10 in \citet{Ghosal:2007p267} gives
$\p_{0}^{(n)}\left(D_{n}\leq\rho_{n}(c_{3})\right)\lesssim\left(n\epsilon_{n}^{2}\right)^{-m/2}$
for every $c_{3}>0$.

Fubini's theorem entails that
$\E_{0}^{(n)}(N_{n}(1-\boldsymbol{\phi}_{n}))\leq\sup_{\Sc_{n}(M)\cap\Theta_{k_n}(Q)}\E_{\boldsymbol{\theta}}^{(n)}(1-\boldsymbol{\phi}_{n})$.
Along with $\E_{0}^{(n)}(\boldsymbol{\phi}_{n})$, it is upper
bounded in Lemma \ref{lem:tests} by $e^{-c_{2}M\frac{\log
n}{L(n)}n\epsilon_{n}^{2}}$.

Lemma \ref{lem:prior_mass} implies that
$\E_{0}^{(n)}(\widetilde{N_{n}})\leq\Pi(\Theta_{k_n}^{c}(Q))\leq
e^{-c_{6}n\epsilon_{n}^{2}}$ and Lemma \ref{lem:minoration_bn}
yields $\Pi_{n}(\mathcal{B}_{n}(m))\geq
e^{-c_{4}n\epsilon_{n}^{2}}$. 
Constants $c_3$ and $c_4$ are fixed, so we can choose $M$, $M_0$ and $Q$ large enough for $c_6$ to be sufficiently large (see proof of Lemma \ref{lem:prior_mass}), such
that $\min(M\frac{\log n}{L(n)}c_2,c_6)>c_3+c_4+1$. It implies that
the third term in Equation (\ref{eq:post mass Sn}) is bounded above
by $e^{-c_{5}n\epsilon_{n}^{2}}$ for some positive $c_{5}$. Finally,
\[
\E_{0}^{(n)}\Pi\left(\Sc_{n}(M)\vert
X^{n}\right)=\mathcal{O}\left(\left(n\epsilon_{n}^{2}\right)^{-m/2}\right)\underset{n\to\infty}{\longrightarrow}0,\]
since
$n\epsilon_{n}^{2}\underset{n\rightarrow\infty}{\longrightarrow}\infty$.
\end{proof}

\subsection{Proposition \ref{pro:Under-pointwise-loss,}}

The proof of the lower bound in the local risk case uses the next
lemma, whose proof follows from Cauchy-Schwarz' inequality.

\begin{lem}
\label{lem:lowerbound} If $\E(B_n^2)=o(\E(A_n^2))$, then
$\E((A_n+B_n)^2)=\E(A_n^2)(1+o(1))$.
\end{lem}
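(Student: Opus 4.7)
The plan is to expand the square and bound the cross term via Cauchy-Schwarz, as the hint in the statement suggests.

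First I would write
\[
\E((A_n+B_n)^2) = \E(A_n^2) + 2\E(A_n B_n) + \E(B_n^2).
\]
The third term is $o(\E(A_n^2))$ by assumption, so the only thing to control is the cross term $\E(A_n B_n)$.

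Next I would apply Cauchy-Schwarz to the cross term:
\[
|\E(A_n B_n)| \leq \sqrt{\E(A_n^2)\,\E(B_n^2)} = \sqrt{\E(A_n^2)} \cdot \sqrt{\E(B_n^2)}.
\]
Using $\E(B_n^2) = o(\E(A_n^2))$, the right-hand side equals $\sqrt{\E(A_n^2)} \cdot o\bigl(\sqrt{\E(A_n^2)}\bigr) = o(\E(A_n^2))$.

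Combining both bounds, the sum of the second and third terms is $o(\E(A_n^2))$, which factors out as $\E(A_n^2)(1+o(1))$, giving the claim. There is no real obstacle here: the lemma is essentially a one-line consequence of Cauchy-Schwarz, and the only mild point to note is that if $\E(A_n^2)=0$ the statement is trivial (in which case $\E(B_n^2)=0$ too and both sides vanish), so we may assume $\E(A_n^2)>0$ when dividing.
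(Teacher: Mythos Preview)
Your proof is correct and matches the paper's approach: the paper simply states that the lemma ``follows from Cauchy-Schwarz' inequality,'' and your expansion of the square together with the Cauchy-Schwarz bound on the cross term is exactly that argument.
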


\begin{proof}
(of \textbf{Proposition \ref{pro:Under-pointwise-loss,}})

The coordinates of $\hat{\boldsymbol{\theta}}_n$ are
$\hat{\theta}_{nj} = \Pi\left({\theta}_j\vert
X^{n}\right) = \sum_{k=1}^{\infty}\pi(k\vert
X^{n})\widetilde{\theta}_{nj}(k)$, with
$\widetilde{\theta}_{nj}(k) = {\tau_{j}^{2}}/(\tau_{j}^{2}+\nicefrac{1}{n})X_{j}^{n}$
if $k\geq j$, and $\widetilde{\theta}_{nj}(k)=0$ otherwise
\citep[see][]{Zhao:2000p98}.

Denote $u_{j}(X^{n})=\sum_{k\geq j}\pi(k\vert X^{n})=\pi(k\geq
j\vert X^{n})$, so that
$\hat{\theta}_{nj}=u_{j}(X^{n}){\tau_{j}^{2}}/(\tau_{j}^{2}+\nicefrac{1}{n})X_{j}^{n}$.
Denote $K_{n}=n^{1/(2\beta+1)}$ and
$J_{n}=n^{1/2\beta}$. {Most of
the posterior mass on $k$ is concentrated before
$K_{n}$, in the sense that there exists a
constant $c$ such that
\begin{equation}
\E_{0}^{(n)}\left(u_{K_{n}}(X^{n})\right)\lesssim\exp\left(-cK_{n}\right).\label{eq:E_K_n}\end{equation}
This follows from the exponential inequality
\begin{equation*}
P_{\boldsymbol{\theta}_0}^{(n)}[u_{K_{n}}(X^{n})>\exp(-cK_{n})]\lesssim\exp(-cK_{n}),
\end{equation*}
which is obtained by classic arguments in line with Theorem \ref{thm:apo}: writing the posterior quantity $u_{K_{n}}(X^{n})$ as a ratio $N_n/D_n$, and then using Fubini's theorem, Chebyshev's inequality and an upper bound on $\pi(k>K_n)$.}

Due to Relation (\ref{eq:wnm}), we split in three the sum in the
risk $$R_{n}^{\text{loc}}(\boldsymbol{\theta}_{0},t) = \E_{0}^{(n)}
\left(\sum_{i=1}^{\infty}a_{i}[(1-u_{i}(X^{n})\tfrac{\tau_{i}^{2}}{\tau_{i}^{2}
+
\nicefrac{1}{n}})\theta_{0i}-u_{i}(X^{n})\tfrac{\tau_{i}^{2}}{\tau_{i}^{2}
+ \nicefrac{1}{n}}\frac{\xi_{i}}{\sqrt{n}}]\right)^{2}$$ by
centring the stochastic term $X_{i}^{n}$ and writing
$1-u_{i}(X^{n})\tfrac{\tau_{i}^{2}}{\tau_{i}^{2}+\nicefrac{1}{n}}=\frac{1}{n}\tfrac{1}{\tau_{i}^{2}+\nicefrac{1}{n}}+\tfrac{\tau_{i}^{2}}{\tau_{i}^{2}+\nicefrac{1}{n}}(1-u_{i}(X^{n}))$.
The idea of the proof is to show that there is a leading term in the
sum, and to apply Lemma \ref{lem:lowerbound}.

Let $R_1=\left(\sum_{i=1}^{\infty}a_{i}\tfrac{1}{n\tau_{i}^{2}+1}\theta_{0i}\right)^{2}$,
$R_2 = \E_{0}^{(n)}\left(\sum_{i=1}^{\infty}a_{i}\tfrac{\tau_{i}^{2}}{\tau_{i}^{2}+\nicefrac{1}{n}}(1-u_{i}(X^{n}))\theta_{0i}\right)^{2}$
 and $R_3 =
 \E_{0}^{(n)}\left(\sum_{i=1}^{\infty}a_{i}\tfrac{\tau_{i}^{2}}{\tau_{i}^{2}+\nicefrac{1}{n}}u_{i}(X^{n})\frac{\xi_{i}}{\sqrt{n}}\right)^{2}$.
By using Cauchy-Schwarz' inequality
\begin{eqnarray*}
R_{1}=\left(\sum_{i=1}^{\infty}a_{i}\tfrac{1}{n\tau_{i}^{2}+1}\theta_{0i}\right)^{2} & = & \left(\sum_{i=1}^{\infty}a_{i}\tfrac{i^{-\beta}}{n\tau_{i}^{2}+1}\theta_{0i}i^{\beta}\right)^{2}\\
 & \lesssim & L_{0}\sum_{i=1}^{\infty}\tfrac{i^{-2\beta}}{(ni^{-2q}+1)^{2}},\end{eqnarray*}
because the $a_{i}$'s are bounded. If $2\beta-4q>1$, then we can write
\[
R_{1}\lesssim\frac{1}{n^{2}}\sum_{i=1}^{\infty}i^{-2\beta+4q}\lesssim\frac{1}{n^{2}},\]
and if $2\beta-4q\leq1$, then comparing to an integral
provides\begin{eqnarray*}
R_{1} & \lesssim & \int_{1}^{\infty}\tfrac{x^{-2\beta}}{(nx^{-2q}+1)^{2}}dx\\
 & \lesssim & \left(n^{1/2q}\right)^{1-2\beta}\int_{n^{-1/2q}}^{\infty}\tfrac{y^{-2\beta}}{(y^{-2q}+1)^{2}}dy\\
 & \lesssim & n^{-\frac{2\beta-1}{2q}}\lesssim n^{-\frac{2\beta-1}{2\beta}},\end{eqnarray*}
where the last inequality holds because $q$ is chosen such that
$q\leq\beta$. Then
$R_{1}=\go(n^{-(2\beta-1)/2\beta})$.

For $k=2,3$, denote $R_{k}(b_{n},c_{n})$ the partial sum of $R_{k}$
from $j=b_{n}$ to $c_{n}$. Then $R_{2}(1,J_{n})$ is the larger term
in the decomposition, and is treated at the end of the section. The
upper part $R_{2}(J_{n},\infty)$ is easily bounded by
\[
R_{2}(J_{n},\infty)\lesssim\left(\sum_{i=J_{n}}^{\infty}\left|\theta_{0i}\right|i^{\beta}i^{-\beta}\right)^{2}\lesssim\,
J_{n}^{-2\beta+1}=\go\left(n^{-\frac{2\beta-1}{2\beta}}\right).\] We
split $R_{3}(1,J_{n})$ in two parts $R_{3,1}(1,J_{n})$ and
$R_{3,2}(1,J_{n})$ by writing $u_i(X^n) = u_{J_n}(X^n) + \pi(i\leq k
< J_n | X^n)$ for all $i\leq J_n$:
\begin{eqnarray*}
nR_{3}(1,J_{n}) & \lesssim & \E_{0}^{(n)}\left(\sum_{j=1}^{J_{n}}\pi(j|X^{n})\sum_{i=1}^{j}a_{i}\tfrac{\tau_{i}^{2}}{\tau_{i}^{2}+\nicefrac{1}{n}}\xi_{i}\right)^{2}\\
 &  & +\E_{0}^{(n)}\left(u_{J_n}(X^n)\sum_{i=1}^{J_{n}}a_{i}\tfrac{\tau_{i}^{2}}{\tau_{i}^{2}+\nicefrac{1}{n}}\xi_{i}\right)^{2}\\
 & := & R_{3,1}(1,J_{n})+R_{3,2}(1,J_{n}).\end{eqnarray*}
Let
$\Gamma_{jn}(X^{n})=\sum_{i=1}^{j}a_{i}\tfrac{\tau_{i}^{2}}{\tau_{i}^{2}+\nicefrac{1}{n}}\xi_{i}$.
We have $\sum_{j=1}^{J_{n}}\pi(j|X^{n})\leq1$ so we can apply
Jensen's inequality, \begin{eqnarray*}
R_{3,1}(1,J_{n}) & \leq & \E_{0}^{(n)}\left(\sum_{j=1}^{J_{n}}\pi(j|X^{n})\Gamma_{jn}(X^{n})^{2}\right)\\
 & \leq & \E_{0}^{(n)}\max_{j\leq J_{n}}\left\{ \Gamma_{jn}(X^{n})^{2}\right\} .\end{eqnarray*}
Noting that $\left(\Gamma_{jn}(X^{n})\right)_{1\leq j\leq J_{n}}$ is
a martingale, we get using Doob's inequality\[
R_{3,1}(1,J_{n})\leq\E_{0}^{(n)}\Gamma_{J_{n}n}(X^{n})^{2}=\sum_{i=1}^{J_{n}}\left(a_{i}\tfrac{\tau_{i}^{2}}{\tau_{i}^{2}+\nicefrac{1}{n}}\right)^{2}\lesssim
J_{n}.\] The second term $R_{3,2}(1,J_{n})$ can be upper bounded in
the same way as for $R_{3}(J_{n},\infty)$ in Equation (\ref{eq:I3}) below by
noting that \[
R_{3,2}(1,J_{n})\lesssim\E_{0}^{(n)}\left[u_{J_{n}}(X^{n})^{2}\left(\sum_{i=K_{n}}^{\infty}\tfrac{\tau_{i}^{2}}{\tau_{i}^{2}+\nicefrac{1}{n}}\left|\xi_{i}\right|\right)^{2}\right].\]
For the upper part $R_{3}(J_{n},\infty)$, we use the bound
(\ref{eq:E_K_n}) on $\E_{0}^{(n)}\left(u_{K_{n}}(X^{n})\right)$,
\begin{eqnarray}
nR_{3}(J_{n},\infty) & \lesssim & \E_{0}^{(n)}\left(\sum_{i=K_{n}}^{\infty}\tfrac{\tau_{i}^{2}}{\tau_{i}^{2}+\nicefrac{1}{n}}u_{i}(X^{n})\left|\xi_{i}\right|\right)^{2}\nonumber \\
 & \lesssim & \E_{0}^{(n)}\left[u_{K_{n}}(X^{n})^{2}\left(\sum_{i=K_{n}}^{\infty}\tfrac{\tau_{i}^{2}}{\tau_{i}^{2}+\nicefrac{1}{n}}\left|\xi_{i}\right|\right)^{2}\right]\label{eq:I3}\\
 & \lesssim & \left[\E_{0}^{(n)}u_{K_{n}}(X^{n})^{4}\right]^{1/2}\left[\E_{0}^{(n)}\left(\sum_{i=K_{n}}^{\infty}\tfrac{\tau_{i}^{2}}{\tau_{i}^{2}+\nicefrac{1}{n}}\left|\xi_{i}\right|\right)^{4}\right]^{1/2}\nonumber \\
 & \lesssim & \left[\E_{0}^{(n)}u_{K_{n}}(X^{n})\right]^{1/2}\left[\left(\sum_{i=K_{n}}^{\infty}\tfrac{\tau_{i}^{2}}{\tau_{i}^{2}+\nicefrac{1}{n}}\right)^{4}\right]^{1/2}\nonumber \\
 & \lesssim & e^{-c_{2}K_{n}/2}n^{1/q},\nonumber \end{eqnarray}
where we bound the different moments of $\left|\xi_{i}\right|$ by a
unique constant and then use
$\sum_{i=K_{n}}^{\infty}\tau_{i}^{2}/(\tau_{i}^{2}+\nicefrac{1}{n})=\go(n^{1/2q})$.
Then $R_{3}=\go(n^{-(2\beta-1)/2\beta})$.

To sum up, $R_{2}(1,J_{n})$ is the only remaining term. We build an
example where it is of greater order than
$n^{-(2\beta-1)/2\beta}$. Let $\boldsymbol{\theta}_{0}$ be
defined  by its coordinates $\theta_{0i}
=i^{-\beta-1/2}\left(\log(i+1)\right)^{-1}$ such that the series
$\sum_{i}\theta_{0i}^{2}i^{2\beta}$ converge, so
$\boldsymbol{\theta}_{0}$ belongs to the Sobolev ball of smoothness
$\beta$. It is assumed that $a_{i}=\psi_{i}(t)=1$, so all terms in the
sum $R_{2}(1,J_{n})$ are \textit{positive}, hence\[
R_{2}(1,J_{n})\geq \frac{1}{4}
\E_{0}^{(n)}\left(\sum_{i=K_{n}}^{J_{n}}(1-u_{i}(X^{n}))\theta_{0i}\right)^{2},\]
noting that for $i\leq J_{n}$, we have $n\tau_{i}^{2}\geq
n^{1-q/\beta}\geq1$ because $q\leq\beta$ and $n\geq1$, so
$\tau_{i}^{2}/(\tau_{i}^{2}+\nicefrac{1}{n})\geq1/2$.
Moreover, $u_{i}(X^{n})$  decreases with $i$, so
\begin{equation*}
R_{2}(1,J_{n}) \geq \frac{1}{4}
\E_{0}^{(n)}\left((1-u_{K_n}(X^{n}))^2\right)\left(\sum_{i=K_{n}}^{J_{n}}\theta_{0i}\right)^{2},\end{equation*}
where $\E_{0}^{(n)}\left((1-u_{K_n}(X^{n}))^2\right)$ is lower
bounded by a positive constant for $n$ large enough. Comparing the
 series $\sum_{i=K_{n}}^{J_{n}}\theta_{0i}$ to an integral
shows that it is bounded from below by $K_{n}^{-\beta+1/2}/\log n$.
We obtain by using Lemma \ref{lem:lowerbound} that
$R_{n}^{\text{loc}}(\boldsymbol{\theta}_{0},t)=R_{2}(1,J_{n})(1+o(1))
\gtrsim{n^{-\frac{2\beta-1}{2\beta+1}}}/{\log^{2}n}$, which ends the
proof.
\end{proof}
\end{document}